
\documentclass[11pt,a4paper]{article}
\usepackage[ansinew]{inputenc}
\usepackage{amsthm}
\usepackage{amssymb}
\usepackage{amsmath}
\usepackage{amsfonts}

\usepackage{epsfig}

\usepackage{graphics,graphicx}

\usepackage[ruled, lined]{algorithm2e}

\newtheorem{theorem}[equation]{Theorem}
\newtheorem{corollary}[equation]{Corollary}
\newtheorem{lemma}[equation]{Lemma}
\newtheorem{proposition}[equation]{Proposition}
\newtheorem{example}[equation]{Example}

\newtheorem{conjecture}[equation]{Conjecture}
\theoremstyle{definition}
\newtheorem{definition}[equation]{Definition}
\newtheorem{remark}[equation]{Remark}

\numberwithin{equation}{section}

\newcommand{\N}{{\mathbb{N}}}


\begin{document}

\title{Comparing rankings by means of competitivity graphs: structural properties and computation}


\author{
R.\,Criado$^{1,2}$, E.\,Garc\'{\i}a$^{1,2}$, F.\,Pedroche$^3$ and M.\,Romance$^{1,2}$\\
\\
{\small $^1$\,Departamento de Matem\'{a}tica Aplicada} \\
{\small ESCET - Universidad Rey Juan Carlos} \\
{\small C/ Tulip\'an s/n 28933 M\'ostoles (Madrid), Spain.}\\
{\small regino.criado@urjc.es, esther.garcia@urjc.es, miguel.romance@urjc.es}\\
\\
{\small $^2$\,Centro de Tecnolog\'{\i}a Biom\'edica} \\
{\small Universidad Polit\'{e}cnica de Madrid} \\
{\small 28223 Pozuelo de Alarc\'on (Madrid), Spain.}\\
 \\
{\small $^3$\,Institut de Matem\`{a}tica Multidisciplin\`{a}ria} \\
{\small Universitat Polit\`{e}cnica de Val\`{e}ncia} \\
{\small Cam\'{\i} de Vera s/n. 46022 Val\`{e}ncia, Spain.}\\
{\small pedroche@mat.upv.es}\\
}
\date{}

\maketitle

\begin{abstract}
In this paper we introduce a new technique to analyze families of rankings focused on the study of structural properties of a new type of graphs. Given a finite number of elements and a family of rankings of those elements, we say that two elements {\em compete} when they exchange their relative positions in at least two rankings. This allows us to define an undirected graph by connecting elements that compete. We call this graph a {\em competitivity graph}. We study the relationship of competitivity graphs with other well-known families of graphs, such as permutation graphs, comparability graphs and chordal graphs. In addition to this, we also introduce certain important sets of nodes in a competitivity graph. For example, nodes that compete among them form a {\em competitivity set} and nodes connected by chains of competitors form a {\em set of  eventual competitors}. We analyze these sets and we show a method to obtain sets of eventual competitors directly from a family of rankings.

\end{abstract}

%

 \section{Introduction}\label{sec:intro}

In a recent work (see \cite{Cri13}) we introduced a method to compare full rankings based on the study of structural properties of a new type of graphs. Regarding our proposal, any finite set of rankings leads to a graph called {\em competitivity graph}. In this paper we analyze the theoretical properties of this graph providing some relationships with some already known families of graphs. Therefore, we are interested more in the tool itself -- the {\em competitivity graph} -- than in the original focus of the problem -- the rankings.

Any full ranking of $n$ elements can be considered a permutation of $\{1,2, \dots, n\}$. Note that according to this approach it is not possible to consider rankings with ties of two or more elements. One of the first studies on comparison of full rankings appears in the work of Kendall \cite{Kendall38}, where the counting of interchanges between two given rankings plays an essential role. In that seminal paper the main objective was to introduce a measure of correlation between two rankings -- the so-called Kendall's measure of correlation. Since then, some papers have been devoted to analyze, compare and extend this coefficient. For example, in \cite{DiGr} some metrics to compare rankings can be found.  Given some rankings one may be interested in obtaining a consensus ranking: a ranking that {\em better} gathers the information collected in the rankings. The resulting consensus ranking is also called an aggregated rank, see \cite{LaMe2012}. Many studies are devoted to describe techniques for aggregate rankings with an eye on ranking web pages on the Internet, see. e.g. \cite{Bar}. The most famous technique to aggregate rankings consists of a Markov chain used by the founders of Google \cite{PaBrMoWi}. A different approach consists of finding a permutation that minimizes the crossings over a set of rankings, see \cite{Bie} and \cite{Bie09} for more details on computational complexity.

The concept of {\em competitivity graph} can be considered as a technique of rank aggregation since it is a graph that collects some information (the crossings) between pairs of adjacent rankings. When dealing with only two rankings, the {\em competitivity graph} turns into the well known {\em permutation graph} from graph theory \cite{GrYe}. This concept must not be confused with the concept of {\em planar permutation graph} which is a different mathematical object \cite{Char}, \cite{Go}. In this connection it is necessary to recall that permutation graphs belong to a bigger class of graphs known as {\em comparability graphs}, which is well characterized since the 1960's (see \cite{GiHo} and \cite{Ga}). It is important to highlight that {\em comparability graphs} are also called transitively orientable graphs (TRO) in the sense that an orientation that preserves transitivity can be set in the graph.  Some other papers related to permutation graphs are \cite{PnLeEv}, where an algorithm to detect permutation graphs based on testing whether both a graph and its complement are TRO graphs is presented, \cite{McCo} where an algorithm with a cost of $O(m+n)$  for recognizing permutation graphs is shown, being $m$ and $n$ the edges and vertices of a graph, respectively, \cite{Ko} where a characterization about the connectivity of permutation graphs is provided, and many others (see, for example, \cite{Wang}, \cite{Kr}, \cite {Limo}). Recently, a new characterization of permutations graph is given in \cite{GeRaRa}: the authors introduce the concept of {\em cohesive vertex-set order} based on the connectivity of pairs of vertices to establish a characterization. We will use this latter characterization as a starting point of some of our analysis on {\em competitivity graphs}.

The structure of the paper is the following: after this introduction, section $2$ is devoted to introducing some preliminary results and the main definitions, particularly the one of competitivity graphs. In section $3$ the relationships among competitivity graphs, comparability graphs and other well known families of graphs are studied. Finally, section $4$ is devoted to studying in depth the sets of eventual competitors. Moreover, an algorithm for computing them directly from the rankings is provided.

 \section{Competitivity graphs}\label{sec:basic}

\begin{definition}
Given a finite set of elements ${\mathcal N}=\{1,\dots, n\}$  ($n\in \N$) that we will call {\it nodes} we define a {\it ranking} $c$ of ${\mathcal N}$ as any bijection $c:{\mathcal N}\longrightarrow {\mathcal N}$.
\end{definition}

Since a ranking is a permutation of the elements of ${\mathcal N}$, we can identify rankings with vectors of ${\mathcal N}^n={\mathcal N}\times \cdots \times {\mathcal N}$ in the following way: if we take a ranking $c:{\mathcal N}\longrightarrow {\mathcal N}$, we identify $c\equiv(i_1,\dots, i_n)\in {\mathcal N}^n$, where $c(1)=i_1, c(2)=i_2$, \dots, $c(n)=i_n$.

If $c\equiv(i_1,\dots, i_n)$ is a ranking, then we will write  $i\prec_{c} j$ when node $i$ appears first than node $j$ in the vector of the ranking $c$, i.e., when $c(i)<c(j)$.

\begin{definition}
Given a finite set ${\mathcal R}=\{c_1, c_2,\dots, c_r \}$ of rankings  we say that the pair of nodes $(i,j)\in {\mathcal N}$ {\it compete} if there exist $c_s,c_t\in\{1,2,\ldots, r\}$ such that $i\prec_{c_s} j$ but $j\prec_{c_t} i$, i.e.,   $i$ and $j$ exchange their relative positions between the rankings $c_{s}$ and $c_{t}$.
\end{definition}

The competitiveness between two nodes $i,j\in {\mathcal N}$ is strongly related with the fact that $(i,j)$ is an inversion of a ranking of the family, as the following result shows. Remember that an {\it inversion} in a ranking $c$ is a pair of two nodes $(i,j)$  such that
\[
(i-j)(c^{-1}(i)-c^{-1}(j))<0.
\]

\begin{lemma}\label{equivdef}
Given a finite set ${\mathcal R}=\{c_1, c_2,\dots, c_r \}$ of rankings, the following conditions are equivalent:
\item [{\it (i)}] The pair of nodes $(i,j)$ compete.
\item[{\it (ii)}] There exists  $c_s\in\{c_1,\ldots, c_{r-1}\}$ such that    $i$ and $j$ exchange their relative positions between the rankings $c_{s}$ and $c_{s+1}$.
\item[{\it (iii)}] There exists a relabeling of the nodes such that $c_1={\rm id}\equiv(1,2,\dots,n)$ and some  $c_s\in\{c_2,\ldots, c_r\}$ with an inversion of the form $(i,j)$.
\end{lemma}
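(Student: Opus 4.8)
The plan is to establish the two equivalences $(i)\Leftrightarrow(ii)$ and $(i)\Leftrightarrow(iii)$ separately against the plain definition~(i) of competition, since (ii) is a statement about \emph{consecutive} rankings while (iii) is a statement about a \emph{normalized} family, and each is most naturally compared to (i) rather than to one another.

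For $(i)\Leftrightarrow(ii)$, the implication $(ii)\Rightarrow(i)$ is immediate: the two consecutive rankings $c_s,c_{s+1}$ are themselves a witnessing pair in the definition of competition. For the converse $(i)\Rightarrow(ii)$ the idea is a discrete intermediate-value argument. For each ranking $c_k$ set $\varepsilon_k=+1$ if $i\prec_{c_k}j$ and $\varepsilon_k=-1$ otherwise; exactly one case occurs because $c_k$ is a bijection, so the two positions are distinct. Saying that $(i,j)$ compete means precisely that the sequence $\varepsilon_1,\dots,\varepsilon_r$ is not constant, and any non-constant sequence of signs must change value between two consecutive indices, i.e.\ there is an $s$ with $\varepsilon_s\neq\varepsilon_{s+1}$. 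That index $s$ furnishes the consecutive pair required in (ii).

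For $(i)\Leftrightarrow(iii)$, the key observation is that competition is invariant under relabeling of the nodes and under the choice of reference ranking. Concretely, relabel the nodes by the permutation $\sigma=c_1^{-1}$, so that $c_1$ becomes the identity $(1,2,\dots,n)$. Relabeling the values by $\sigma$ turns each ranking $c$ into $\sigma\circ c$ and preserves the relative position of every pair of elements, so $(i,j)$ compete in the original family exactly when the image pair competes in the relabeled one. Now, with $c_1=\mathrm{id}$, every pair appears in its natural order in $c_1$, hence $(i,j)$ compete if and only if some later ranking $c_s$ with $s\ge 2$ reverses this order; and reversing the natural order of a pair is, by definition, exactly having an inversion $(i-j)(c_s^{-1}(i)-c_s^{-1}(j))<0$. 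This yields (iii) in the forward direction (producing such an $s\in\{2,\dots,r\}$) and recovers competition from the inversion in the backward direction.

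I expect the single genuine step to be the discrete intermediate-value argument in $(i)\Rightarrow(ii)$; everything else is unwinding definitions, the only care being the bookkeeping that relabeling preserves both the relative order of each pair and hence the competition relation. A minor point to state explicitly in (iii) is that ``an inversion of the form $(i,j)$'' refers to the (relabeled) pair, and that one may always take $s\ge 2$ because $c_1$, being the identity, contains no inversions.
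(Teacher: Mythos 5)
Your proof is correct, and its ingredients coincide with the paper's, but the decomposition differs. The paper proves the single cycle \emph{(ii)}$\Rightarrow$\emph{(i)}$\Rightarrow$\emph{(iii)}$\Rightarrow$\emph{(ii)}: its step \emph{(i)}$\Rightarrow$\emph{(iii)} is your relabeling argument, with the minor variation that the paper notes that of the two witnessing rankings $c_s,c_t$ one must carry the inversion, whereas you use $c_1={\rm id}$ itself as the canonical witness; its step \emph{(iii)}$\Rightarrow$\emph{(ii)} is a descent by induction (from an inversion $(i,j)$ in $c_s$, either the pair swaps between $c_{s-1}$ and $c_s$ or $c_{s-1}$ inherits the inversion, and the descent must stop because $c_1={\rm id}$ has no inversions). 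Your discrete intermediate-value argument for \emph{(i)}$\Rightarrow$\emph{(ii)} is exactly that descent phrased non-inductively via the sign sequence $\varepsilon_1,\dots,\varepsilon_r$; it buys you a route to \emph{(ii)} that never passes through the normalization \emph{(iii)}, making explicit that the equivalence \emph{(i)}$\Leftrightarrow$\emph{(ii)} is relabeling-free, at the cost of proving four implications where the paper's cycle needs only three. Both arguments are complete, and your explicit check that relabeling by $\sigma=c_1^{-1}$ preserves positions (hence the competition relation) is a detail the paper leaves implicit.
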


\begin{proof}
Clearly {\it (ii)}$\implies${\it (i)}. For {\it (i)}$\implies${\it (iii)} relabel the nodes such that $c_1={\rm id}$; if $i$ and $j$ exchange their relative positions between rankings $c_s$ and $c_t$ either $(i,j)$ is an inversion of $c_s$ or $(i,j)$ is an inversion of $c_t$.

For {\it (iii)}$\implies${\it (ii)}, once the nodes have been relabeled so that $c_1={\rm id}$ and we have an inversion $(i,j)$ in ranking $c_s$, either $i$ and $j$ exchange their relative positions between $c_s$ and $c_{s-1}$, or $c_{s-1}$ also contains the inversion $(i,j)$. The result follows by induction since ${\mathcal R}$ is a finite set.
\end{proof}

If we take a family of rankings ${\mathcal R}=\{c_1, c_2,\dots, c_r \}$ we are going to associate it a graph that gives information about the structure of the competitiveness between nodes.

\begin{definition}
Let ${\mathcal R}=\{c_1, c_2,\dots, c_r \}$ be a family of rankings of nodes ${\mathcal N}=\{1,\dots, n\}$. We define the {\it competitivity graph} of the family of rankings ${\mathcal R}$ as the undirected graph  $G_c({\mathcal R})=({\mathcal N}, E_{\mathcal R})$, where the set of edges $E_{\mathcal R}$ is given by the following rule: there is a link between nodes $i$ and $j$ if $(i,j)$ compete.
\end{definition}

\begin{remark}
This kind of graphs have already been defined and studied in the particular case of two rankings ($r=2$). They are the so-called {\it permutation graphs}, see \cite{PnLeEv}, \cite{Wang},  \cite {Kr}, \cite{Ko}, \cite{Bie}, \cite{Limo}, \cite{GeRaRa}. Moreover, by Lemma \ref{equivdef}, after the relabeling of the nodes such that $c_1={\rm id}$,   the { competitivity graph} of the family of rankings ${\mathcal R}$ is the graph consisting of nodes ${\mathcal N}$ and edges given by the inversions induced by $c_2,\dots, c_r$, i.e., the union of all the possible permutation graphs given by $c_2,\dots, c_r$, where the union of graphs means the classic union of graphs $G_1\cup G_2=(V_1\cup V_2, E_1\cup E_2)$ if $G_1=(V_1,E_1)$ and $G_2=(V_2,E_2)$ (see, for example \cite{Diestel}).
\end{remark}

\begin{example}
If we consider ${\mathcal N}=\{1,\dots, 6\}$, and the following rankings of ${\mathcal N}$:
\begin{align*}
c_1\equiv\enspace &(1,2,3,4,5,6),\\
c_2\equiv\enspace &(1,3,4,2,5,6),\\
c_3\equiv\enspace &(1,2,5,3,4,6),\\
c_4\equiv\enspace &(3,2,6,1,5,4),
\end{align*}
then the competitivity graph $G_c({\mathcal R})=({\mathcal N}, E_{\mathcal R})$ of the family of rankings ${\mathcal R}=\{c_1,c_2,c_3,c_4\}$ is shown in Figure~\ref{fbarco}.

\bigskip
\begin{figure}[ht!]
\centerline{\includegraphics{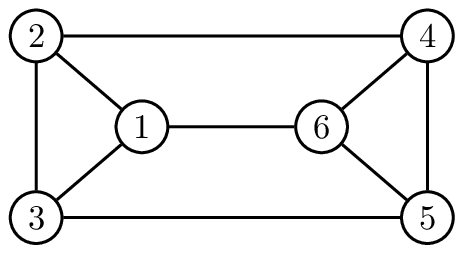}}

\bigskip
\caption{\label{fbarco}
The competitivity graph $G_c({\mathcal R})=({\mathcal N}, E_{\mathcal R})$ of the family of rankings ${\mathcal R}=\{c_1,c_2,c_3,c_4\}$.}
\end{figure}

\bigskip

%
%
\end{example}

\begin{remark}
The computational cost of constructing the competitivity graph $G_c({\mathcal R})=({\mathcal N}, E_{\mathcal R})$ of the family of rankings ${\mathcal R}=\{c_1,\dots ,c_r\}$, where ${\mathcal N}=\{1,\dots, n\}$ is of order $rn^2$, since by Lemma~\ref{equivdef}, after relabeling the nodes such that $c_1={\rm id}$ (which has computational cost of order $rn$ in the worst case), we only have to find the inversions between $c_1$ and each of the $c_s$, which has computational cost of order $(r-1) {n \choose 2}\approx rn^2$.
\end{remark}


One of the goals of this paper is showing that there are some deep connections between structural properties of the competitivity graph $G_c({\mathcal R})$ (such as connectedness, maximal cliques,...) and properties of the rankings and the competitiveness between nodes. As a first example of these relationships we can consider the nodes that compete with a fixed node $i\in{\mathcal N}$.

\begin{definition}
If we take a family of rankings ${\mathcal R}=\{c_1, \dots, c_r\}$ of nodes ${\mathcal N}=\{1,\dots, n\}$ and we fix $i\in {\mathcal N}$, the {\em competitivity set} $C(i)$ {\it of a node} $i$ is the set of elements of ${\mathcal N}$ that compete with $i$ together with $i$, i.e.,
\[
C(i)=\{j\in \mathcal{N}\ |\ (i,j)\hbox{ compete}\}\cup\{i\}.
\]
\end{definition}

\begin{remark}
Local information of the competitivity graph gives properties of the competitivity set of nodes, since it is straightforward to check that $C(i)$ corresponds to $i$ together with all its neighbors in the competitivity graph $G_c({\mathcal R})$.
\end{remark}

\begin{definition}
If we take a family of rankings ${\mathcal R}=\{c_1,\dots, c_r \}$ of nodes ${\mathcal N}=\{1,\dots, n\}$, a set of nodes $C\subseteq{\mathcal N}$ is called a {\it set of competitors} if it is a maximal set with respect to the property of  competition among its elements, i.e., given any two elements  $i,j\in C$, $(i,j)$ compete and $C$ is maximal with respect to this property.
\end{definition}

\begin{remark}
The sets of competitors are exactly the maximal complete subgraphs (maximal cliques) of $G_c({\mathcal R})$. Notice that two nodes compete if and only if they belong to the same set of competitors. Moreover, it can be checked that a set of nodes $C\subseteq{\mathcal N}$ is a competitors set if and only if
\[
C=\bigcap_{i\in C}C(i).
\]
\end{remark}


The previous remark points out the fact that the maximal cliques of $G_c({\mathcal R})$ correspond to the sets of competitors. If we consider other structural sets of nodes of $G_c({\mathcal R})$, such as the connected components of $G_c({\mathcal R})$, we obtain other weaker set of competitors, as the following:

\begin{definition}
If we take a family of rankings ${\mathcal R}=\{c_1, \dots, c_r \}$ of nodes ${\mathcal N}=\{1,\dots, n\}$, we say that a pair of nodes $(i,j)\in {\mathcal N}$ {\it eventually compete} if there exist $k\in\mathbb{N}$ and nodes $i_i,\dots, i_k\in {\mathcal N}$ such that $(i, i_1)$ compete, $(i_1,i_2)$ compete,\dots, and $(i_k,j)$ compete.

A set of nodes $D\subseteq {\mathcal N}$ is called a {\it set of  eventual competitors} if it is a maximal set with respect to the property of eventual competition among its elements.
\end{definition}

\begin{remark}
It is straightforward to check that if a pair of nodes $(i,j)$ compete, then eventually compete. Furthermore, $(i,j)$ eventually compete if and only if $i$ and $j$ are connected by a path in the graph $G_c({\mathcal R})$.

Notice that the sets of eventual competitors of ${\mathcal N}$ are the connected components of $G_c({\mathcal R})$ and two nodes eventually compete if and only if they belong to the same set of eventual competitors. Clearly if two nodes belong to different sets of eventual competitors, they cannot compete.
\end{remark}

In section~\ref{sec:components} we will make a deeper study of sets of eventual competitors, their structure and an algorithm that calculates them without computing the whole competitivity graph $G_c({\mathcal R})$.



 \section{Competitivity, comparability, permutation and chordal graphs}\label{sec:compet-compar-permut}

In this section we will study the relationships among comparability graphs and other well-known families of graphs such as comparability graphs, permutation graphs and chordal graphs. We will follow the notation used in \cite{ISGCtI}.

Let us start this section by reminding the basic well-known definitions of chordal, comparability and permutation graphs that are the main classic families of graphs that will be compared with the competitivity graphs.

\begin{definition}
A graph $G=({\mathcal N},E)$ is {\it chordal} if  each of its cycles of four or more vertices has a chord, which is an edge joining two nodes that are not adjacent in the cycle.
\end{definition}

\begin{definition}
Given any partial ordered set $({\mathcal N}, \preceq)$ we can associate a directed graph $G_{\preceq}$ to  $({\mathcal N}, \preceq)$ as defined in  \cite{GrYe}: the vertex set is ${\mathcal N}$ and there is a link from $i$ to $j$, $i\ne j$, if  $i \preceq j$.

A graph $G=({\mathcal N},E)$ is a {\it comparability graph} if it is the non-directed graph obtained after removing orientation in $G_{\preceq}$ for some partial order $\preceq$ of ${\mathcal N}$.
\end{definition}

\begin{remark}\label{char:compara}
It has been proven \cite{Ghou,PnLeEv} that a graph $G=({\mathcal N},E)$ is a comparability graph if and only if it admits a {\sl transitive orientation of its edges}, i.e. if there is a directed graph $\overrightarrow{G}=({\mathcal N},\overrightarrow{E})$ obtained from $G$ by orienting each edge in $E$, such that if $(i,j),(j,k)\in \overrightarrow{E}$, then $(i,k)\in \overrightarrow{E}$.
\end{remark}

Comparability graphs are a broadly studied class of graphs, and are related to other families of graphs such as permutation graphs, interval graphs, etc. see e.g., \cite{Golum}. We highlight the work of Gallai \cite{Ga} where the modular decomposition of a graph was introduced as a tool to characterize when a graph is a comparability graph.

\begin{definition}
A graph $G=({\mathcal N},E)$ is a {\it permutation graph}  if its vertices represent the elements of a permutation and each one of its edges correspond to a pair of elements that are reversed by the permutation.
\end{definition}

Permutation graphs may also be defined geometrically, as the intersection graphs of line segments whose endpoints lie on two parallel lines.

\begin{remark}
A very useful characterization of permutation graphs is the fact that both the graph $G$ and its complement $\bar G$ (the graph with the same set of nodes and links between nodes that are not linked in $G$) are comparability graphs, i.e., admit a transitive orientation of its edges, see \cite{DuMi}.
\end{remark}

Notice that permutation graphs are both comparability (by using the last remark) and competitivity graphs (simply by considering two rankings: $c_1={\rm id}$ and $c_2$ as the permutation that constitutes the permutation graph).

There are several characterizations of permutation graphs, but we point out the following given in terms of {\it cohesive vertex-set orders} (see \cite{GeRaRa}).

\begin{definition}[\cite{GeRaRa}]\label{def:filipina}
If $G=({\mathcal N},E)$ is a graph, we say that $G$ has a {\it cohesive vertex-set order  (or simply cohesive order)} if there is a relabeling of the nodes such that:
\begin{itemize}
\item[{\it (i)}] if there is a link between nodes $a$ and $b$ and $a<b$, then for every $x$, $a<x<b$, there must exist a link between $a$ and $x$ or a link between $x$ and $b$.
\item[{\it (ii)}] ({\it transitivity}) if there is a link between nodes $a$ and $x$, another between $x$ and $b$, and $a<x<b$, then $a$ and $b$ are also linked.
\end{itemize}
\end{definition}

It was proved the following characterization of permutation graphs in terms of  {\it cohesive vertex-set orders} (see \cite{GeRaRa}):

\begin{theorem}[\cite{GeRaRa}, Theorem 2.3]\label{th:filipino}
$G=({\mathcal N},E)$ is a permutation graph if and only if it has a cohesive vertex-set order.
\end{theorem}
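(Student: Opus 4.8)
The plan is to prove both implications through the standard geometric model of a permutation graph: two parallel lines carrying the vertices, where the top line fixes a reference order and two vertices are adjacent precisely when the segments joining their two copies cross, i.e. when the pair is an inversion with respect to the reference order. Concretely, if $\sigma(i)$ denotes the position of vertex $i$ on the bottom line, then for $a<b$ in the reference order one has $a\sim b$ in $G$ if and only if $\sigma(a)>\sigma(b)$. Both cohesive conditions translate cleanly into statements about $\sigma$, which is what should make the equivalence transparent.

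For the forward implication I would assume $G$ is a permutation graph and take as the relabeling of the nodes the reference (top-line) order of such a model. I would verify $(ii)$ first: if $a\sim x$ and $x\sim b$ with $a<x<b$, then $\sigma(a)>\sigma(x)$ and $\sigma(x)>\sigma(b)$, so $\sigma(a)>\sigma(b)$ and hence $a\sim b$. For $(i)$, if $a\sim b$ with $a<b$ then $\sigma(a)>\sigma(b)$; were both $a\not\sim x$ and $x\not\sim b$ to hold for some $a<x<b$, we would obtain $\sigma(a)<\sigma(x)<\sigma(b)$, a contradiction. Thus the reference order is a cohesive order.

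For the converse I would assume $G$ admits a cohesive order, use it as the top-line order, and construct the bottom line explicitly. I would define a relation $\prec$ by declaring, for every pair $a<b$, that $b\prec a$ when $a\sim b$ and $a\prec b$ otherwise; this is a tournament, and I want it to be a linear order, for then placing the vertices along the bottom line in the order $\prec$ realizes $G$ as a permutation graph, since adjacency becomes inversion by construction. The only thing to check is transitivity, and because $\prec$ is a tournament this reduces to excluding a directed $3$-cycle on each triple $a<b<c$. There are exactly two cyclic patterns: one corresponds to $\{a,c\}$ being an edge while $\{a,b\}$ and $\{b,c\}$ are not, which is forbidden by condition $(i)$; the other corresponds to $\{a,b\}$ and $\{b,c\}$ being edges while $\{a,c\}$ is not, which is forbidden by condition $(ii)$. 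Hence no $3$-cycle occurs, $\prec$ is a linear order, and the construction succeeds. (Equivalently, orienting every edge of $G$ and every edge of $\bar G$ from the smaller to the larger label yields transitive orientations of $G$ and of $\bar G$, transitivity being guaranteed by $(ii)$ and $(i)$ respectively, so $G$ and its complement are both comparability graphs, which is the characterization of permutation graphs recalled above.)

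The main obstacle is the converse direction, and specifically the verification that the constructed tournament $\prec$ has no directed triangles. The useful simplification is that transitivity of a tournament is a purely local, triple-by-triple condition, so the whole problem collapses to matching the two forbidden $3$-cycles against the two cohesive axioms; the slightly delicate point is to fix the orientation conventions so that condition $(i)$ handles exactly the ``edge spanning a non-edge gap'' triangle and condition $(ii)$ handles the ``two edges forcing a chord'' triangle, with neither overlap nor omission.
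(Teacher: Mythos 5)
Your proof is correct in both directions: the forward implication correctly reads conditions \emph{(i)} and \emph{(ii)} off the inversion model, and the converse correctly reduces transitivity of your tournament $\prec$ to excluding the two directed triangles, which are matched (with the right orientation conventions) against axioms \emph{(i)} and \emph{(ii)} respectively. Note, however, that the paper itself does not prove this statement at all --- it is imported verbatim as Theorem~2.3 of the cited reference [GeRaRa], so there is no in-paper argument to compare against. Relative to that reference, your argument is a clean self-contained proof; moreover, your parenthetical alternative (orient edges of $G$ by increasing label, transitive by \emph{(ii)}; orient edges of $\bar G$ by increasing label, transitive by \emph{(i)}; then invoke the Dushnik--Miller characterization of permutation graphs as graphs $G$ with both $G$ and $\bar G$ comparability) is exactly the route most compatible with the surrounding text of the paper, which recalls the Dushnik--Miller criterion in a remark and observes right after Definition~2.3 that condition \emph{(ii)} alone characterizes comparability graphs. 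Either of your two arguments would serve as a proof; the tournament version is more elementary (it avoids citing Dushnik--Miller), while the orientation version makes transparent why the two cohesive axioms split as ``comparability of $G$'' plus ``comparability of $\bar G$''.
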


\begin{remark}
Notice that, by using Remark~\ref{char:compara}, for every comparability graphs there is always a relabeling of the nodes such that satisfies condition~{\it(ii)} of Definition~\ref{def:filipina} (transitivity), since it admits a transitive orientation of its vertices (in fact, condition~{\it(ii)} characterizes comparability graphs). Hence we get an alternative proof of the fact that every permutation graph is a comparability graph.
\end{remark}

Following the essence of the definition of cohesive order (Definition~\ref{def:filipina}) and since condition~{\it(ii)} (transitivity) characterizes comparability graphs, we can investigate the graphs that verify condition~{\it(i)}, introducing the following definition.

\begin{definition}\label{basicproperty}
If $G=({\mathcal N},E)$ is a graph,  we say that $G$ has a {\it semi-cohesive vertex-set order (or simply semi-cohesive order)} if there is a relabeling of the nodes such that verifies condition~{\it (i)} of Definition~\ref{def:filipina}, i.e., if there is a link between nodes $a$ and $b$ and $a<b$, then for every $x$, $a<x<b$, there must exist a link between $a$ and $x$ or a link between $x$ and $b$. We will say that a graph $G$ is {\it semi-cohesive} if it has a semi-cohesive vertex-set order.
\end{definition}

While condition~{\it(ii)} of Definition~\ref{def:filipina} is connected with comparability graphs, condition~{\it(i)} (semi-cohesiveness) is connected with competitivity graphs, as the following lemma shows.

\begin{lemma}\label{semicohesive}
Every competitivity graph is semi-cohesive.
\end{lemma}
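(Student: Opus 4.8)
The plan is to exhibit an explicit relabeling of the nodes that witnesses semi-cohesiveness, and the natural candidate is the one supplied by Lemma~\ref{equivdef}. First I would relabel the nodes so that $c_1={\rm id}\equiv(1,2,\dots,n)$; by part~{\it(iii)} of Lemma~\ref{equivdef} this does not change which pairs compete, and hence does not alter the competitivity graph beyond the relabeling. I then claim that the natural order $1<2<\cdots<n$ on ${\mathcal N}$ is a semi-cohesive order for $G_c({\mathcal R})$ in the sense of Definition~\ref{basicproperty}.

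To verify condition~{\it(i)} of Definition~\ref{def:filipina}, I would fix a linked pair $a<b$ (so $(a,b)$ compete) and an arbitrary $x$ with $a<x<b$, and show that $(a,x)$ compete or $(x,b)$ compete. Since $c_1={\rm id}$ we have $c_1(a)=a<b=c_1(b)$, so $a\prec_{c_1}b$; the competition of $(a,b)$ therefore must be witnessed by some $c_t$ with $b\prec_{c_t}a$, i.e. $c_t(b)<c_t(a)$.

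The core of the argument is then a short case analysis on the position of $x$ in the single witnessing ranking $c_t$. Because $c_t$ is a bijection with $c_t(b)<c_t(a)$, at least one of $c_t(x)<c_t(a)$ or $c_t(x)>c_t(b)$ must hold (were both to fail we would get $c_t(a)\le c_t(x)\le c_t(b)$, contradicting $c_t(b)<c_t(a)$). In the first case $x\prec_{c_t}a$ whereas $a\prec_{c_1}x$ (as $a<x$), so $(a,x)$ compete; in the second case $b\prec_{c_t}x$ whereas $x\prec_{c_1}b$ (as $x<b$), so $(x,b)$ compete. Either way condition~{\it(i)} holds, which establishes the lemma.

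I expect the only genuine obstacle to be identifying the correct relabeling: once one commits to the identity order coming from Lemma~\ref{equivdef}, the remainder is an elementary comparison of positions in a single ranking, with no induction and no appeal to the permutation-graph machinery of Theorem~\ref{th:filipino}. It is worth noting that the argument never uses both rankings witnessing the competition of $(a,b)$ --- only the one $c_t$ in which their relative order is reversed with respect to $c_1$ --- which is what keeps the case analysis clean.
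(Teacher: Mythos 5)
Your proof is correct and follows essentially the same route as the paper: relabel so that the identity ranking is in the family, take the single ranking $c_t$ that reverses the linked pair $(a,b)$, and observe that any intermediate $x$ must either precede $a$ or follow $b$ in $c_t$, yielding competition of $(a,x)$ or $(x,b)$ against the identity. The paper phrases the dichotomy as ``$x\prec_{c_m}a$ or else $b\prec_{c_m}a\prec_{c_m}x$,'' which is exactly your case analysis.
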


\begin{proof} Suppose without loss of generality that the ranking ${\rm id}\equiv (1,2,\dots, n)$ belongs to the family of rankings ${\mathcal R}$ that generates the graph. Suppose that there is a link between nodes $a$ and $b$, $a<b$, and take $x$ such $a<x<b$. If there is a link between $a$ and $b$, $(a,b)$ compete, so  there exists another ranking $c_{m}\in {\mathcal R}$ such that $b\prec_{c_{m}}a$. If $x\prec_{c_{m}}a$, the pair $(x,a)$ compete and  there is a link between $a$ and $x$, and otherwise $b\prec_{c_{m}}a\prec_{c_{m}}x$ so the pair $(b,x)$ compete, giving rise to a link between $x$ and $b$.
\end{proof}

\begin{conjecture} Lemma~\ref{semicohesive} is a characterization of competitivity graphs, i.e. $G=({\mathcal N},E)$ is a competitivity graph if and only if it has a semi-cohesive vertex-set order.
\end{conjecture}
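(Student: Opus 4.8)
The ``only if'' direction is exactly Lemma~\ref{semicohesive}, so the whole content of the conjecture lies in the converse: every semi-cohesive graph is a competitivity graph. The plan is to fix a semi-cohesive order and realize $E$ as a union of inversion sets. Relabel the nodes $1,\dots,n$ so that the semi-cohesive order of Definition~\ref{basicproperty} holds, and set $c_1=\mathrm{id}$. By the remark identifying the competitivity graph of a family containing $\mathrm{id}$ with the union of the permutation graphs (i.e.\ the inversion sets) of the remaining rankings, it suffices to produce, for each edge $(a,b)\in E$ with $a<b$, a ranking $\sigma_{a,b}$ whose set of inversions contains $(a,b)$ and is contained in $E$. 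The family $\{\mathrm{id}\}\cup\{\sigma_{a,b}:(a,b)\in E\}$ will then have competitivity graph exactly $G$, since the union of these inversion sets is contained in $E$ (each summand is) and contains every edge (as $(a,b)\in\mathrm{Inv}(\sigma_{a,b})$).

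To build $\sigma_{a,b}$ I would leave every node outside the interval $\{a,\dots,b\}$ fixed and permute the block $\{a,a+1,\dots,b\}$ as follows. For each intermediate node $x$, $a<x<b$, condition~(i) of Definition~\ref{def:filipina} applied to the edge $(a,b)$ guarantees that $(a,x)\in E$ or $(x,b)\in E$; classify $x$ as type $L$ if only $(a,x)\in E$, type $R$ if only $(x,b)\in E$, and type $B$ if both hold (semi-cohesiveness rules out the fourth possibility). Let $\sigma_{a,b}$ read, inside the block,
\[
(\,L_{\uparrow},\; b,\; B_{\uparrow},\; a,\; R_{\uparrow}\,),
\]
where $L_{\uparrow},B_{\uparrow},R_{\uparrow}$ denote the nodes of each type listed in increasing order. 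By construction $(a,b)$ is an inversion, $a$ is inverted exactly with the nodes of $L\cup B$ (all satisfying $(a,x)\in E$), $b$ is inverted exactly with the nodes of $B\cup R$ (all satisfying $(x,b)\in E$), and the fixed outer nodes create no inversion with the block.

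The delicate point, which I expect to be the crux, is to check that the inversions occurring \emph{between two intermediate nodes} also lie in $E$. Such an inversion arises precisely when intermediates $x<y$ sit in different type-blocks with the block of $y$ preceding the block of $x$ in the order $L,B,R$; then $(x,y)$ is an inversion and one must show $(x,y)\in E$. If $y$ is of type $L$ then $(y,b)\notin E$, while the later node $x$ is of type $B$ or $R$ and hence $(x,b)\in E$; condition~(i) on the edge $(x,b)$ with $x<y<b$ then forces $(x,y)\in E$. Otherwise $y$ is of type $B$ and $x$ of type $R$, so $(a,y)\in E$ while $(a,x)\notin E$, and condition~(i) on $(a,y)$ with $a<x<y$ again forces $(x,y)\in E$. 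Thus $\mathrm{Inv}(\sigma_{a,b})\subseteq E$, which completes the construction and would prove that every semi-cohesive graph is a competitivity graph, settling the conjecture affirmatively. The main obstacle is exactly this control of the intermediate--intermediate crossings: it is what forces the three-way classification and the precise interleaving above, and it is the step where semi-cohesiveness is used in a non-obvious way.
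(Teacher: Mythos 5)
There is nothing in the paper to compare your proof against: the statement is left as a \emph{conjecture}, and the only direction the paper establishes is Lemma~\ref{semicohesive} (every competitivity graph is semi-cohesive). What you have written is an attempt at the genuinely open direction, and after checking it carefully I believe it is correct — i.e., it would settle the conjecture affirmatively. The reduction is sound: since $\mathrm{id}$ belongs to your family, the competitivity graph is exactly the union of the inversion sets of the remaining rankings (this is the Remark following Lemma~\ref{equivdef}), so it suffices to produce, for each edge $\{a,b\}\in E$, a permutation $\sigma_{a,b}$ with $(a,b)\in\mathrm{Inv}(\sigma_{a,b})\subseteq E$. Your block permutation $(L_{\uparrow},\,b,\,B_{\uparrow},\,a,\,R_{\uparrow})$ does exactly this. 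The verification is exhaustive: nodes outside $\{a,\dots,b\}$ create no inversions because the block is an interval permuted onto its own positions; $a$ is inverted precisely with $b$ and with $L\cup B$, and $b$ precisely with $B\cup R$, all of which are edges by the definition of the types (and the types cover all intermediates precisely because the order is semi-cohesive, applied to the edge $\{a,b\}$); within a type there are no inversions since you list nodes in increasing order; and the only cross-type inversions are $y\in L$ above $x\in B\cup R$, handled by condition~\emph{(i)} of Definition~\ref{def:filipina} applied to the edge $\{x,b\}$ using $\{y,b\}\notin E$, and $y\in B$ above $x\in R$, handled by condition~\emph{(i)} applied to the edge $\{a,y\}$ using $\{a,x\}\notin E$. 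I also tested the construction on concrete cases (e.g.\ the complement of $C_6$ with the labeling of Figure~\ref{fbarco}, and graphs where $L$, $B$, $R$ are simultaneously nonempty) and it produces inversion sets contained in $E$ as claimed.

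Two small points to add when you write this up: state explicitly that the resulting family $\{\mathrm{id}\}\cup\{\sigma_{a,b}:\{a,b\}\in E\}$ is a finite set of rankings (at most $|E|+1$ of them, so the construction is even economical), and dispose of the trivial case $E=\emptyset$ by taking ${\mathcal R}=\{\mathrm{id}\}$. With those remarks, your argument upgrades the paper's conjecture to a theorem: $G$ is a competitivity graph if and only if it has a semi-cohesive vertex-set order, with one direction being Lemma~\ref{semicohesive} and the other being your construction. Since this resolves a question the authors left open, it deserves a careful, self-contained write-up rather than the sketch form above, but I see no gap in the mathematics.
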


In order to show the relationships among competitivity, permutation, comparability and chordal graphs we need to set some auxiliar lemmas.

\begin{definition}
If $G=({\mathcal N},E)$ is a graph, we define the {\it common neighbours} of a pair of nodes $i, j\in{\mathcal N}$  as the set of nodes that are either linked to $i$ or $j$ (or to both).
\end{definition}

It is easy to check that the number of common neighbours of two nodes is controlled in semi-cohesive graphs, simply by using the definition, as the following lemma shows.

\begin{lemma}\label{commoneighbours}
If a graph $G=({\mathcal N},E)$ is semi-cohesive with respect to some labeling of its nodes and we have relabeled  ${\mathcal N}$ according to this labeling, then if $i, j\in {\mathcal N}$ are linked, then the number of the common neighbours of $i$ and $j$ is at least $|i-j|-1$.
\end{lemma}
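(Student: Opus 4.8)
The plan is to read off the bound directly from the semi-cohesive condition, applied to the vertices that sit strictly between $i$ and $j$ in the relabeling. Since $|i-j|$ is symmetric in $i$ and $j$, I would first assume without loss of generality that $i<j$, and then focus on the set of intermediate nodes
\[
S=\{x\in{\mathcal N}\ :\ i<x<j\}.
\]
This set has exactly $j-i-1=|i-j|-1$ elements, and none of them equals $i$ or $j$, so each element of $S$ that turns out to be adjacent to $i$ or to $j$ will genuinely be a common neighbour in the sense of the preceding definition.

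The key step is to invoke the semi-cohesive property (Definition~\ref{basicproperty}) with $a=i$ and $b=j$. This is legitimate precisely because $i$ and $j$ are linked and $i<j$, which is exactly the hypothesis under which the property can be triggered. For each $x\in S$ the property then guarantees a link between $i$ and $x$ or a link between $x$ and $j$; in either case $x$ is adjacent to $i$ or to $j$. Recalling that the common neighbours of $i$ and $j$ are, by definition, the nodes linked to $i$ or to $j$ (the union of the two neighbourhoods, not their intersection), every element of $S$ is a common neighbour of $i$ and $j$.

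Putting these together, the $|i-j|-1$ distinct nodes of $S$ all belong to the set of common neighbours, which yields the desired lower bound. I do not expect a genuine obstacle here: the argument is a one-shot application of the definition. The only point that deserves care is the nonstandard reading of \emph{common neighbours} as a union of neighbourhoods rather than an intersection; it is exactly this reading that makes the count work, since an intermediate node is only promised to be adjacent to one of $i,j$ (not necessarily to both), and the semi-cohesive hypothesis gives nothing stronger.
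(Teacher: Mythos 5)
Your proof is correct and is exactly the argument the paper intends: the paper states this lemma without proof, remarking only that it follows ``simply by using the definition,'' and your one-shot application of the semi-cohesive condition to the pair $(i,j)$, counting the $|i-j|-1$ intermediate nodes under the paper's union-style definition of common neighbours, is precisely that argument. Your explicit attention to the nonstandard (union rather than intersection) reading of \emph{common neighbours} is well placed, since the bound would be false under the usual intersection reading.
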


\begin{lemma}\label{pentagononocomp}
Every cycle of $n$ vertices $C_{n}$ with $n\ge 5$  is not semi-cohesive, and therefore it is not a competitivity graph.
\end{lemma}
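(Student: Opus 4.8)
The plan is to prove the purely graph-theoretic assertion that $C_n$ is not semi-cohesive for $n\ge 5$; the final clause then follows at once by contraposition from Lemma~\ref{semicohesive}. A first reflex is to feed $C_n$ into Lemma~\ref{commoneighbours}: since every vertex of a cycle has degree $2$, any two linked nodes $i,j$ have at most two common neighbours, so a semi-cohesive labelling would force $|i-j|\le 3$ on every edge. This necessary condition is, however, far from sufficient --- for instance the $5$-cycle written as the necklace $(1,3,5,2,4)$ has all its edge-differences equal to $2$ or $3$ and yet is readily checked not to be semi-cohesive --- so I would argue directly from Definition~\ref{basicproperty}, by induction on $n$ with base case $n=5$.

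For the inductive step I would use a vertex-bypass. Assume $n\ge 6$ and that $C_n$ carries a semi-cohesive labelling $1,\dots,n$; let $v$ be the vertex carrying the largest label $n$, with cycle-neighbours $s<t$. Delete $v$ and insert the edge $\{s,t\}$; the result is a cycle $C_{n-1}$ still labelled by $1,\dots,n-1$, and I claim this inherited labelling is semi-cohesive. Every edge $\{a,b\}$ surviving from $C_n$ keeps its old witnesses, because for an interior point $x\in(a,b)$ the adjacency $x\sim a$ or $x\sim b$ guaranteed in $C_n$ uses only labels $\le n-1$ and hence is untouched by the surgery. For the new edge $\{s,t\}$ (with $s<t<n$), any interior point $x\in(s,t)\subset(s,n)$ must, by semi-cohesiveness of the old edge $\{s,n\}$, satisfy $x\sim s$ or $x\sim n$ in $C_n$; but the only neighbours of $n$ are $s$ and $t$, so $x\sim n$ is impossible for $x\in(s,t)$, forcing $x\sim s$, an adjacency that survives in $C_{n-1}$. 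Thus $C_{n-1}$ is semi-cohesive, and iterating this reduction transports a semi-cohesive labelling of any $C_n$ with $n\ge5$ down to one of $C_5$.

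It then remains to settle the base case: $C_5$ is not semi-cohesive. Suppose it were, with labels $1,\dots,5$. The edge $\{1,5\}$ cannot occur, since its three interior labels $2,3,4$ would all have to be adjacent to $1$ or $5$, whereas $1$ and $5$ each contribute only one further neighbour. Hence $1$ and $5$ lie at cyclic distance two, the cycle reads $1\text{-}m\text{-}5\text{-}b\text{-}c\text{-}1$ with $\{m,b,c\}=\{2,3,4\}$, and $m$ is their unique common neighbour. Examining the three possibilities $m\in\{2,3,4\}$ then produces an uncovered interior point in each case: for $m=2$ the edge $\{2,5\}$ leaves one of $3,4$ uncovered; for $m=4$ the edge $\{1,4\}$ leaves one of $2,3$ uncovered; and for $m=3$ the condition on $\{1,3\}$ forces the configuration $1\text{-}3\text{-}5\text{-}4\text{-}2\text{-}1$, whose edge $\{2,4\}$ has the uncovered interior point $3$. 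This contradiction completes the base case, and together with the inductive step it establishes the lemma.

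I expect the main obstacle to be precisely the insufficiency of the degree bound coming from Lemma~\ref{commoneighbours}: the statement cannot be reduced to a one-line count, and one is forced to exploit the full covering requirement of Definition~\ref{basicproperty}. The technical heart is therefore verifying that the bypass preserves semi-cohesiveness --- specifically the coverage of the newly created edge $\{s,t\}$ --- while the base case, though elementary, seems to require the small case distinction above rather than a single uniform argument.
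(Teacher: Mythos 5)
Your proof is correct, but it takes a genuinely different route from the paper's for the general case. The paper proves non-semi-cohesiveness only for $C_5$ --- it fixes a label $1$, uses Lemma~\ref{commoneighbours} to force the two neighbours of $1$ to carry labels at most $4$, and a short case split (first $i=4$, then $\{i,j\}=\{2,3\}$) yields the contradiction --- and then simply asserts that ``the same argument applies verbatim'' for $n>5$. You replace that assertion with an actual inductive step: the vertex-bypass surgery (delete the vertex labelled $n$, join its cycle-neighbours $s<t$) provably carries a semi-cohesive labelling of $C_n$ to one of $C_{n-1}$, the key point being that coverage of the new edge $\{s,t\}$ is inherited from coverage of the old edge $\{s,n\}$, since $x\sim n$ is impossible for $s<x<t$ (the only neighbours of $n$ are $s$ and $t$). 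This costs more work than the paper invests but buys a rigorous reduction where the paper offers only a claim of analogy; it is arguably the more defensible write-up. Your base case for $C_5$ (ruling out the edge $\{1,5\}$ by a degree count, then casing on the common neighbour $m$ of $1$ and $5$) is organized differently from the paper's case analysis but is an equally valid finite check, and your closing appeal to Lemma~\ref{semicohesive} for the ``therefore not a competitivity graph'' clause matches the paper exactly. Your side remark that the bound of Lemma~\ref{commoneighbours} is necessary but not sufficient is also accurate: the paper likewise uses that lemma only to restrict labels before invoking the covering condition of Definition~\ref{basicproperty} directly.
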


\begin{proof}
Let us work with the cycle of 5 vertices. The same argument applies verbatim for cycles of $n$ vertices, $n>5$.
Suppose that it is semi-cohesive and relabel the nodes in a semi-cohesive vertex order. Let us label one of its nodes with 1 and let $i,j,k,m,n$ be the labels of the rest of the nodes as in Figure~\ref{fig:example02}.

\bigskip
\begin{figure}[ht!]
\centerline{\includegraphics{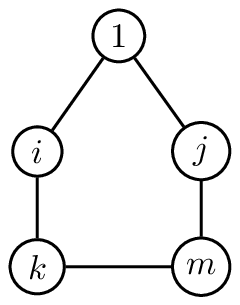}}
\bigskip
\caption{\label{fig:example02} The cycle of 5 nodes $C_5$}
\end{figure}

\bigskip
\noindent The pairs of nodes $(1,i)$ and $(1,j)$ must satisfy Lemma~\ref{commoneighbours} so $i, j\le 4$. If $i=4$ then $j, k\in \{2, 3\}$  because it is semi-cohesive, hence $m=5$. There is a link between $j$ and $m$ and $j<i=4<m$ so using again that it is semi-cohesive there must exist a link between $i$ and $j$ or a link between $i$ and $m$, which is not the case. Hence $i\ne 4$ and by symmetry $j\ne 4$, so one of them is 2 and the other is 3. Without loss of generality suppose that  $i=2$, $j=3$ and hence we are in the situation of Figure~\ref{fig:example03}.

\bigskip
\begin{figure}[h!]
\centerline{\includegraphics{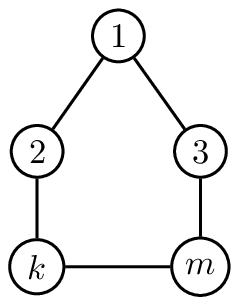}}
\bigskip
\caption{\label{fig:example03} The cycle of 5 nodes $C_5$}
\end{figure}

\bigskip
\noindent  Since $k\ge 4$ and $2$ and $k$ are linked,  since it is semi-cohesive there  must exist a link between $2$ and $3$ or a link between $3$ and $k$, which again is not true and therefore $C_5$ is not semi-cohesive.
\end{proof}

\begin{lemma}
\label{notcomnotcomnotint} Every cycle  with an odd number of vertices $C_{2n+1}$,  $n\ge 2$,  is neither  a competitivity graph, nor a comparability graph nor chordal.
\end{lemma}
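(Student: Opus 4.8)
The plan is to prove the three assertions separately, observing that two of them follow almost immediately from earlier results while the genuinely new content lies in the comparability claim.

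First I would dispatch chordality and competitivity. Since $2n+1\ge 5$ whenever $n\ge 2$, the graph $C_{2n+1}$ is itself a cycle on at least four vertices having no chord, so it fails the defining property of chordal graphs. For the competitivity claim, Lemma~\ref{pentagononocomp} already states that every cycle $C_{m}$ with $m\ge 5$ is not semi-cohesive and hence not a competitivity graph; since $2n+1\ge 5$, this applies directly. Both of these are one-line consequences and require no further work.

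The substantive part is showing that $C_{2n+1}$ is not a comparability graph, and here I would argue by contradiction using the transitive-orientation characterization recalled in Remark~\ref{char:compara}. Label the vertices $0,1,\dots,2n$ cyclically, so that $i$ is adjacent exactly to $i-1$ and $i+1$ (indices modulo $2n+1$). Because $2n+1\ge 5$, for every vertex $i$ its two neighbours $i-1$ and $i+1$ are non-adjacent. Suppose a transitive orientation existed. At each vertex $i$ consider the induced path $(i-1)-i-(i+1)$: if the two incident edges were oriented so as to form a directed path through $i$ (either $i-1\to i\to i+1$ or $i+1\to i\to i-1$), transitivity would demand an edge between $i-1$ and $i+1$, which does not exist. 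Hence every vertex must be a local source or a local sink for its two cycle-edges.

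This local constraint forces the orientations of consecutive edges to alternate: encoding the edge $\{i,i+1\}$ by a sign $\epsilon_{i}\in\{+1,-1\}$ according to its direction, the source/sink condition at vertex $i$ is exactly $\epsilon_{i-1}\ne\epsilon_{i}$. Propagating this alternation around the cycle gives $\epsilon_{i}=(-1)^{i}\epsilon_{0}$, and closing the loop after the odd number $2n+1$ of edges yields $\epsilon_{2n}=(-1)^{2n}\epsilon_{0}=\epsilon_{0}$, contradicting the requirement $\epsilon_{2n}\ne\epsilon_{0}$ coming from the wrap-around vertex $0$. Thus no transitive orientation exists and $C_{2n+1}$ is not a comparability graph. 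I expect this parity argument to be the main (indeed the only) obstacle; the hypothesis $n\ge 2$ is precisely what guarantees the non-adjacency of $i-1$ and $i+1$ that drives the whole argument, and some such lower bound is unavoidable since $C_{3}$, the triangle, is in fact a comparability graph.
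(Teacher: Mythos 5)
Your proof is correct and structurally the same as the paper's: chordality fails directly by definition, and the competitivity claim is exactly an application of Lemma~\ref{pentagononocomp}, which is precisely what the paper does. The only divergence is in the comparability part, where the paper offers no argument at all (``It is easy to see that these graphs do not admit a transitive orientation''), whereas you supply a complete proof: non-adjacency of $i-1$ and $i+1$ (valid since $2n+1\ge 5$) forces every vertex to be a local source or sink, which forces the edge orientations to alternate around the cycle, and the odd length makes such an alternation impossible. This parity argument is sound (including the correct observation that $C_3$ escapes it because there the two neighbours of a vertex are adjacent), so your write-up is rigorous exactly where the paper is not.
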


\begin{proof}
It is easy to see that these graphs  do not admit a transitive orientation of their vertices, i.e., they are not comparability. Moreover they are not a competitivity graph by Lemma~\ref{pentagononocomp}, and they are not chordal either, by definition.
\end{proof}

In the following result we completely relate the classes of competitivity graphs,  comparability graphs and chordal graphs.

\begin{theorem}\label{contraejemplos}
If ${\mathcal G}$ is the set of all finite graphs, $CH{\mathcal G}$, $P{\mathcal G}$, $C{\mathcal G}$ and $CP{\mathcal G}$ are the sets of all chordal, permutation, comparability and competitivity graphs respectively, then:
\begin{itemize}
 \item[{\it (i)}] There are comparability graphs that are neither competitivity  graphs nor chordal graphs, i.e. $C{\mathcal G}\setminus(CP{\mathcal G}\cup CH{\mathcal G})\ne \emptyset$.
 \item[({\it ii)}] There are competitivity graphs that are neither comparability graphs nor chordal graphs, i.e. $CP{\mathcal G}\setminus(C{\mathcal G}\cup CH{\mathcal G})\ne \emptyset$.
 \item[{\it (iii)}] There are  graphs that are neither comparability, competitivity  graphs nor chordal graphs, i.e. ${\mathcal G}\setminus(C{\mathcal G}\cup CP{\mathcal G}\cup C{\mathcal G})\ne \emptyset$.
 \item[{\it (iv)}] Permutation graphs coincide with the intersection of competitivity and comparability graphs, but not every permutation graph is chordal, i.e. $P{\mathcal G}=C{\mathcal G}\cap CP{\mathcal G}$ and $P{\mathcal G}\not\subseteq CH{\mathcal G}$.
 \item[{\it (v)}] There are chordal graphs that are neither  comparability graphs nor competitivity graphs, i.e. $CH{\mathcal G}\setminus(C{\mathcal G}\cup CP{\mathcal G})\ne \emptyset$.
 \item[{\it (vi)}] There are graphs that are both competitivity and chordal but not comparability, i.e. $CP{\mathcal G}\cap CH{\mathcal G}\cap\left({\mathcal G}\setminus C{\mathcal G}\right)\ne \emptyset$.
 \item[{\it (vii)}] There are graphs that are both comparability and chordal but not competitivity, i.e. $C{\mathcal G}\cap CH{\mathcal G}\cap\left({\mathcal G}\setminus CP{\mathcal G}\right)\ne \emptyset$.
 \item[{\it (viii)}] There are graphs that are chordal, competitivity and comparability, i.e. $CH{\mathcal G}\cap CP{\mathcal G} \cap C{\mathcal G}\ne \emptyset$.
\end{itemize}
\end{theorem}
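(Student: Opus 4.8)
The plan is to treat the single structural identity in item~{\it (iv)} as the backbone and to settle the seven remaining clauses by exhibiting explicit witness graphs, most of them already ruled in or out by the cycle lemmas above. The cornerstone I would isolate first is that \emph{every competitivity graph is a co-comparability graph} (its complement is a comparability graph). Indeed, by Lemma~\ref{semicohesive} a competitivity graph $G$ admits a semi-cohesive order; relabel ${\mathcal N}$ accordingly and orient each \emph{non}-edge $\{a,b\}$ of $G$ with $a<b$ as the arc $a\to b$ of $\bar G$. If $a\to b$ and $b\to c$ are such arcs then $a<b<c$ and $\{a,b\},\{b,c\}\notin E$; were $\{a,c\}\in E$, condition~{\it (i)} of Definition~\ref{basicproperty} applied to the edge $\{a,c\}$ and the intermediate vertex $b$ would force one of $\{a,b\},\{b,c\}$ to be an edge, a contradiction. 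Hence $\{a,c\}\notin E$ and the orientation is transitive, so $\bar G$ is a comparability graph. Combining this with the classical characterization of permutation graphs as the graphs $G$ for which both $G$ and $\bar G$ are comparability graphs \cite{DuMi}, a graph that is simultaneously competitivity and comparability is co-comparability and comparability, hence a permutation graph; together with the already noted inclusion $P{\mathcal G}\subseteq C{\mathcal G}\cap CP{\mathcal G}$ this yields $P{\mathcal G}=C{\mathcal G}\cap CP{\mathcal G}$, which is {\it (iv)}.

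For the ``pure'' existence clauses I would read off witnesses from the cycle lemmas. For {\it (iii)} take $C_5$, which by Lemma~\ref{notcomnotcomnotint} is neither comparability, competitivity, nor chordal. For {\it (i)} take $C_6$: it is bipartite, hence comparability, it is not chordal, and it is not competitivity by Lemma~\ref{pentagononocomp}. The second half of {\it (iv)} ($P{\mathcal G}\not\subseteq CH{\mathcal G}$) is witnessed by $C_4$, a permutation graph that is not chordal, and {\it (viii)} is witnessed trivially by $K_3$ (or any $K_n$), which is chordal, comparability, and the permutation graph of the full reversal, hence competitivity.

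The clauses that mix chordality with (non-)competitivity I would organize through the Gilmore--Hoffman theorem \cite{GiHo}, i.e.\ that a graph is an interval graph if and only if it is chordal and co-comparability. Since competitivity implies co-comparability by the backbone above, every chordal competitivity graph is an interval graph; contrapositively, a chordal graph that is \emph{not} an interval graph cannot be competitivity. This disposes of {\it (v)} and {\it (vii)} at once. For {\it (v)} take the $3$-sun: it is chordal but not interval, hence not competitivity, and a short implication-class (forcing) argument on its central triangle---each triangle vertex is forced to be a source or a sink, which is impossible---shows it is not comparability. For {\it (vii)} take a non-caterpillar tree, e.g.\ the star $K_{1,3}$ with each edge subdivided once: it is comparability (bipartite) and chordal (a tree), but not an interval graph, hence not competitivity.

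The genuinely constructive clauses are {\it (ii)} and {\it (vi)}, where I must produce explicit families of rankings and compute the resulting competitivity graph. For {\it (ii)} I would take the triangular prism $\overline{C_6}$: it is realized as a union of a few permutation graphs (so it is competitivity), it contains an induced $C_4$ (so it is not chordal), and the same triangle-forcing argument as for the $3$-sun shows it is not comparability. The main obstacle is {\it (vi)}, since the backbone forces any witness there to be an interval graph: I must exhibit an interval graph that is \emph{not} a comparability graph and realize it as a union of permutation graphs over a common identity ranking. Constructing such a graph (non-comparability must be produced by a forcing configuration that nevertheless keeps the graph chordal and asteroidal-triple-free) and then writing down rankings that generate exactly its edge set---adding no spurious competing pairs---is the delicate step; verification of non-comparability is again an implication-class computation, and verification that the rankings produce the intended graph is the direct inversion count of Lemma~\ref{equivdef}.
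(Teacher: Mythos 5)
Your co-comparability backbone is correct and is a genuinely different organizing principle from the paper's proof. The argument that a semi-cohesive order yields a transitive orientation of the complement (orient non-edges by increasing label; semi-cohesiveness kills any chord $\{a,c\}$ over consecutive non-edges) is sound, and it buys you two things the paper gets by other means: a rigorous proof of the converse half of {\it (iv)} via the characterization of permutation graphs as graphs $G$ with $G$ and $\bar G$ both comparability \cite{DuMi} (the paper instead invokes the cohesive-order theorem of \cite{GeRaRa}, asserting that the semi-cohesive labeling also satisfies transitivity because \emph{some} transitive orientation exists, which is a more delicate claim than yours), and conceptual proofs of {\it (v)} and {\it (vii)} via Gilmore--Hoffman \cite{GiHo} (chordal $+$ competitivity $\Rightarrow$ chordal $+$ co-comparability $=$ interval), where the paper instead performs ad hoc semi-cohesiveness checks on $S_3$ and $XF_2^2$ using Lemma~\ref{commoneighbours}. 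Your witnesses for {\it (i)}, {\it (iii)}, {\it (viii)} and for $P{\mathcal G}\not\subseteq CH{\mathcal G}$ coincide with the paper's.

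However, there is a genuine gap: items {\it (ii)} and {\it (vi)} are \emph{positive} existence claims about competitivity graphs, and membership in $CP{\mathcal G}$ can only be certified by exhibiting a family of rankings (equivalently, by Lemma~\ref{equivdef}, a decomposition of the edge set into inversion sets over a common labeling), which you never do. For {\it (ii)} you assert that the prism $\bar C_6$ ``is realized as a union of a few permutation graphs''; this is true (the paper's example with rankings $c_1,\dots,c_4$, whose competitivity graph is Figure~\ref{fbarco}, does it), but it is not automatic: inversion sets are themselves semi-cohesive, so in the two-triangles-plus-matching labeling no ranking can contribute a matching edge such as $\{2,4\}$ without also contributing triangle edges (it needs $\{2,3\}$ or $\{3,4\}$, and $\{3,4\}$ is not even an edge of the prism), while with the cyclic labeling inherited from $C_6$ the complement is not semi-cohesive at all; the construction genuinely depends on the labeling. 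More seriously, for {\it (vi)} you stop short: you correctly derive from your backbone that any witness must be an interval graph that is not a comparability graph, but you never produce such a graph nor the rankings realizing it, calling this ``the delicate step.'' That step is precisely the content of clause {\it (vi)} --- the paper supplies it by exhibiting $X_{176}$ together with five explicit rankings --- and your general theory can only constrain the witness, not produce it. Until that construction is written down (and verified by the inversion count of Lemma~\ref{equivdef}), the theorem is not proved under your plan.
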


\begin{proof} {\it (i)} The cycle  of 6 vertices, $C_6$,  is a comparability  graph since we can give a transitive orientation to its edges, as Figure~\ref{fig:example04} shows.

\bigskip
\begin{figure}[ht!]
\centerline{\includegraphics[angle=90,origin=c]{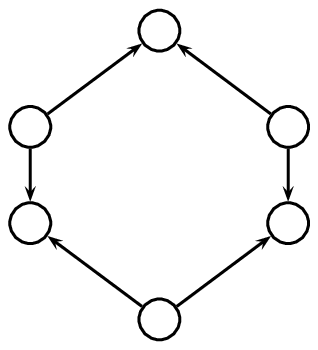}}
\medskip
\caption{\label{fig:example04} The cycle of 6 nodes $C_6$ with a transitive orientation}
\end{figure}

\bigskip
\noindent Nevertheless, $C_6$ is not semi-cohesive by Lemma~\ref{pentagononocomp} and hence it is not a competitivity graph.

\medskip
\noindent{\it (ii)} Consider the  complement of the cycle of 6 vertices $\bar C_6$ with the labeling of its nodes as shown in Figure~\ref{fbarco}. It is a competitivity graph of a family of rankings but its edges do not admit a transitive orientation, hence it is not comparability.

\medskip
\noindent{\it (iii)} Every cycle  with an odd number of vertices $C_{2n+1}$,  $n\ge 2$,  is neither  a competitivity graph, nor a comparability graph nor chordal, as seen in Lemma~\ref{notcomnotcomnotint}.

\medskip
\noindent{\it (iv)} Permutation graphs are competitivity graphs generated by two rankings, and are also comparability graphs as proved in \cite{DuMi} and  \cite{BraLeSpi}. Conversely, suppose that $G$ is a competitivity and  a comparability graph. Since it is  a competitivity graph, it is  semi-cohesive. With respect to that labeling of the nodes, condition {\it (ii)} of Definition~\ref{def:filipina} of cohesive-vertex set order also holds because the graph  has a transitive orientation of its vertices. Then by Theorem~\ref{th:filipino} (see \cite{GeRaRa}, Theorem 2.3), $G$ is a permutation graph. In addition to this, note that it is easy to check that $C_4$ is a permutation but not a chordal graph.

\medskip
\noindent{\it (v)} The graph $S_3$, shown in Figure~\ref{fig:example}.a,  is a chordal graph but it can be checked that it is not semi-cohesive (by using Lemma~\ref{commoneighbours} and following similar reasonings to those used in the proof of Lemma~\ref{pentagononocomp}) and hence it is not a competitivity graph. Moreover, it does not admit a transitive orientation of its edges.

\bigskip
\begin{figure}[ht!]
\begin{center}
\begin{tabular}{cc}
  \includegraphics{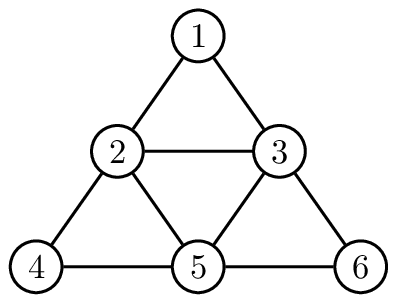} & \includegraphics{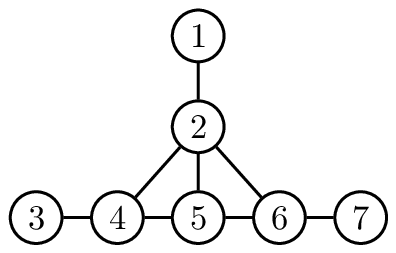} \\
  $\enspace$ & $\enspace$ \\
  (a) & (b) \\
  $\enspace$ & $\enspace$ \\
  $\enspace$ & $\enspace$ \\
  \includegraphics{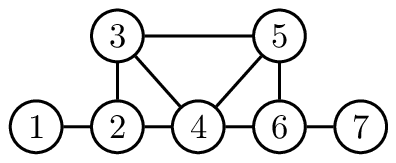} & \includegraphics{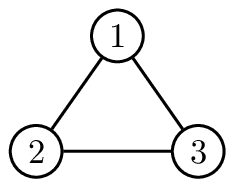} \\
  $\enspace$ & $\enspace$ \\
  (c) & (d)
\end{tabular}
\end{center}
\caption{\label{fig:example} The graphs $S_3$ (Figure~\ref{fig:example}.a) that it is chordal but not competitivity not comparability graph, $XF_{2}^2$ (Figure~\ref{fig:example}.b),  that it is chordal and comparability but not competitivity graph, $X_{176}$ (Figure~\ref{fig:example}.c), that it is chordal and competitivity, but not comparability graph, and $C_3$ (Figure~\ref{fig:example}.d), that it is chordal and permutation graph.}
\end{figure}

\medskip
\noindent{\it (vi)} The graph $X_{176}$, shown in Figure~\ref{fig:example}.c, is chordal, but it is not comparability since it does not admit a transitive orientation of its edges. It is a competitivity graph because it can be generated by the family of rankings ${\mathcal R}=\{c_1,c_2,c_3,c_4,c_5\}$ given by
\begin{align*}
c_1\equiv\enspace &(1,2,3,4,5,6,7),\\
c_2\equiv\enspace &(2,1,4,3,6,5,7),\\
c_3\equiv\enspace &(1,2,5,4,3,7,6),\\
c_4\equiv\enspace &(1,4,2,3,5,6,7),\\
c_5\equiv\enspace &(1,3,2,6,4,5,7).
\end{align*}

\medskip
\noindent{\it (vii)} The graph $XF_2^2$, shown in Figure~\ref{fig:example}.b, is a comparability and chordal graph. Nevertheless it is not a semi-cohesive graph (by using Lemma~\ref{commoneighbours} and following similar reasonings to those used in the proof of Lemma~\ref{pentagononocomp}) and hence it is not a competitivity graph.

\medskip
\noindent{\it (viii)} The cycle of 3 vertices $C_3$, shown in Figure~\ref{fig:example}.d, is chordal, and a permutation graph (hence comparability and competitivity). Indeed it is generated by the family of rankings ${\mathcal R}=\{c_1,c_2\}$ given by
\begin{align*}
c_1\equiv\enspace &(1,2,3),\\
c_2\equiv\enspace &(3,2,1).
\end{align*}
\end{proof}

\begin{remark}
We can summarize Theorem~\ref{contraejemplos} in Figure~\ref{fig:resumen}. Each square in this Figure corresponds to a class of graphs, as follows:
\begin{itemize}
 \item[{\it (i)}] $\mathcal G$ is the set of all finite graphs.
 \item[{\it (ii)}] $C{\mathcal G}$ is the set of all comparability finite graphs.
 \item[{\it (iii)}] $CH{\mathcal G}$ is the set of all chordal graphs.
 \item[{\it (iv)}] $CP{\mathcal G}$ is the set of all competitivity graphs.
\end{itemize}
Then, Theorem~\ref{contraejemplos} shows that each region in the Figure is non-empty (i.e. there some graphs in each region) and $C{\mathcal G}\cap CP{\mathcal G}=P{\mathcal G}$.

\bigskip
\begin{figure}[ht!]
\centerline{\includegraphics[width=0.4\textwidth]{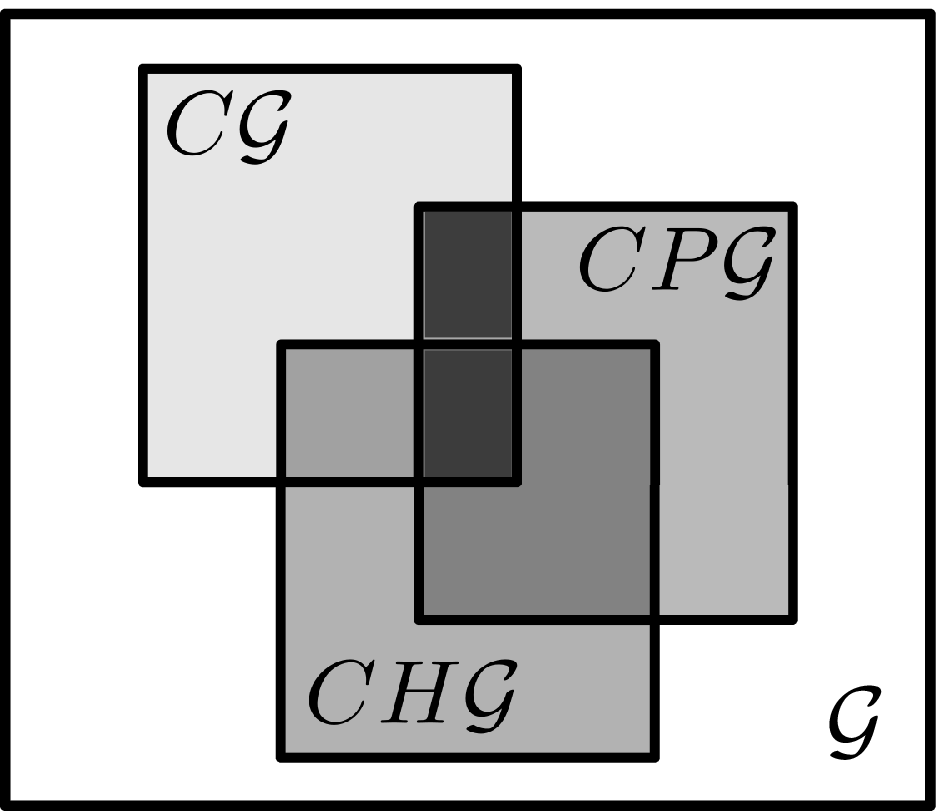}}
\bigskip
\caption{\label{fig:resumen} A visual summary of Theorem~\ref{contraejemplos}. Each region in the figure is non-empty and $C{\mathcal G}\cap CP{\mathcal G}=P{\mathcal G}$.}
\end{figure}
\end{remark}
 \section{Connected Components of the competitiveness graph: Structural properties and computation}\label{sec:components}


In this section we will make a deeper study of sets of eventual competitors. This will lead to a new algorithm to calculate them without computing the competitivity graph in advance. Moreover, we will show that sets of eventual competitors can be ordered by means of a total order relation, giving rise to the leader and the looser sets of eventual competitors.

First of all, we will show a property related to the {\sl convexity} with respect of the rankings of the sets of eventual competitors:

\begin{lemma}\label{convexity}
Given a finite set ${\mathcal R}=\{c_1, c_2,\dots, c_r\}$ of rankings of nodes ${\mathcal N}=\{1,\dots,n\}$, if $D\subseteq {\mathcal N}$ is a set of eventual competitors and $a,b\in D$, then for every $x\in {\mathcal N}$ and every ranking $c_m\in{\mathcal R}$ such that
\[
a\prec_{c_m} x\prec_{c_m} b \implies x\in D.
\]
\end{lemma}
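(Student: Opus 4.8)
The plan is to reduce the statement to the case of a single competing pair and then to propagate the conclusion along a path inside the connected component $D$. The key ingredient is the following \emph{single-edge} observation: if two nodes $p,q$ compete and $p\prec_{c_m}y\prec_{c_m}q$ for some $c_m\in\mathcal R$, then $y$ competes with $p$ or with $q$. To prove it, note that since $p$ and $q$ compete and $p\prec_{c_m}q$, there is a ranking $c_t\in\mathcal R$ with $q\prec_{c_t}p$. I would then split according to the position of $y$ in $c_t$: if $y\prec_{c_t}p$, then $y$ precedes $p$ in $c_t$ while $p\prec_{c_m}y$, so $(p,y)$ compete; if instead $p\prec_{c_t}y$, then $q\prec_{c_t}p\prec_{c_t}y$ yields $q\prec_{c_t}y$ while $y\prec_{c_m}q$, so $(q,y)$ compete. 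In either case $y\in C(p)\cup C(q)$.

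For the general statement, recall that $a,b\in D$ and $D$ is a connected component of $G_c(\mathcal R)$, so there is a path $a=v_0,v_1,\dots,v_\ell=b$ in $G_c(\mathcal R)$ in which each consecutive pair $(v_i,v_{i+1})$ competes. Fix the ranking $c_m$ with $a\prec_{c_m}x\prec_{c_m}b$. If $x$ coincides with some $v_i$ there is nothing to prove, so assume $x\neq v_i$ for all $i$; then each $v_i$ satisfies either $v_i\prec_{c_m}x$ or $x\prec_{c_m}v_i$. Since $v_0=a\prec_{c_m}x$ and $x\prec_{c_m}b=v_\ell$, I would take the smallest index $i+1$ for which $x\prec_{c_m}v_{i+1}$; by minimality $v_i\prec_{c_m}x$, so that $v_i\prec_{c_m}x\prec_{c_m}v_{i+1}$. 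This is the discrete intermediate-value step that converts ``$x$ lies between the far-apart endpoints $a$ and $b$'' into ``$x$ lies between an adjacent, directly competing pair''.

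Applying the single-edge observation to the competing pair $(v_i,v_{i+1})$ and the intermediate node $x$ then gives that $x$ competes with $v_i$ or with $v_{i+1}$. As both $v_i$ and $v_{i+1}$ belong to $D$ and $x$ competes with one of them, $x$ is joined by an edge to a node of $D$; since $D$ is a connected component of $G_c(\mathcal R)$, this forces $x\in D$. I expect the main obstacle to be exactly the passage from the hypothesis, which only constrains the single ranking $c_m$ together with the two endpoints $a,b$ that may be arbitrarily far apart in $G_c(\mathcal R)$, to a usable statement about a single edge; the intermediate-value argument along the path is what removes this gap, after which the single-edge lemma finishes the proof.
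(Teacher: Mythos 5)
Your proof is correct and takes essentially the same route as the paper: your ``single-edge observation'' is precisely the semi-cohesiveness property of competitivity graphs (Lemma~\ref{semicohesive}, re-proved here with respect to the ranking $c_m$), and your minimal-index choice of a straddling edge $(v_i,v_{i+1})$ along the path is the paper's induction along the chain of competitors in compact form. If anything, your formulation is slightly tighter, since the paper's case analysis (``otherwise $a\prec_{c_m} i_1\prec_{c_m} x\prec_{c_m} b$'') silently omits the possibility $i_1\prec_{c_m} a$, which your minimality argument handles automatically.
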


\begin{proof} \underbar{Step 1}: Suppose first that the pair of nodes $(a,b)$ compete. Then, since the competitivity graph is semi-cohesive  (\ref{basicproperty}), $(a,x)$ compete or $(x,b)$ compete, so $x\in D$.

\noindent\underbar{General case}: Since $a,b\in D$, they are eventual competitors and there exist  $k\in\mathbb{N}$ and nodes $i_i,\dots, i_k\in {\mathcal N}$ such that $(a, i_1)$ compete, $(i_1,i_2)$ compete,\dots, and $(i_k,b)$ compete. If $a\prec_{c_m}x\prec_{c_m}i_1$ we can apply Step 1 to nodes $a,x$ and $i_1$ since $(a,i_1)$ compete, and we get that $x\in D$. Otherwise $a\prec_{c_m}i_1\prec_{c_m}x\prec_{c_m}b$. Replace $a$ by node $i_1$ and repeat the same argument. The result follows by induction.
\end{proof}

In order to compute the sets of eventual competitors without calculating the competitivity graph in advance, we need the following auxiliar lemma.

\begin{lemma}\label{casobase} Given a finite set ${\mathcal R}=\{c_1, \dots, c_r\}$ of rankings of nodes ${\mathcal N}=\{1,\dots,n\}$, if $D\subseteq {\mathcal N}$ is a set of eventual competitors and there exist $a\in D$ and $c_m\in{\mathcal R}$ such that $c_m^{-1}(a)=1$ (element $a$ appears in the first position of $c_m$) then
\[
\{x\in{\mathcal N}\ |\ c_s^{-1}(x)=1\hbox{ for some $c_s\in {\mathcal R}$}\}\subseteq D,
\]
i.e., all elements in the first position of the rankings of ${\mathcal R}$ belong to $D$.
\end{lemma}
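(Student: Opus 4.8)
The plan is to prove the inclusion pointwise: I would take an arbitrary element occupying the first position of some ranking, show it competes directly with $a$, and then use the fact that $D$, being a set of eventual competitors, is a connected component of $G_c(\mathcal{R})$ and hence absorbs everything that competes with any of its members.

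First I would fix an arbitrary $x\in{\mathcal N}$ with $c_s^{-1}(x)=1$ for some $c_s\in{\mathcal R}$, and dispose of the trivial case $x=a$, which holds immediately since $a\in D$. So assume $x\neq a$. The crux is the elementary observation that two \emph{distinct} nodes, each sitting in the first position of some ranking of the family, are automatically forced to compete. Indeed, since $a$ occupies the first position of $c_m$ it precedes every other node there, so in particular $a\prec_{c_m}x$. Symmetrically, since $x$ occupies the first position of $c_s$ we have $x\prec_{c_s}a$. Thus $a$ and $x$ exchange their relative positions between the rankings $c_m$ and $c_s$, which is exactly the definition of the pair $(a,x)$ competing.

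To finish I would invoke the structure of $D$. Competing pairs are, in particular, eventual competitors, so $a$ and $x$ lie in the same connected component of $G_c({\mathcal R})$; since $a\in D$ and $D$ is a maximal set of eventual competitors, it follows that $x\in D$. As $x$ was an arbitrary element in the first position of some ranking, this establishes $\{x\in{\mathcal N}\mid c_s^{-1}(x)=1 \text{ for some } c_s\in{\mathcal R}\}\subseteq D$, as required.

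Regarding difficulty, there is no genuine obstacle: once one notices that any two distinct ``leading'' elements compete — because each is pushed ahead of the other in its own ranking — the statement is immediate. The only points deserving a moment of care are the degenerate case $x=a$ and the use of the correct formulation of competition (exchange of relative positions between two rankings of the family, not necessarily adjacent ones); by Lemma~\ref{equivdef} these formulations coincide, so no extra argument is needed, and one never has to appeal to the convexity property of Lemma~\ref{convexity}.
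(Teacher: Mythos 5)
Your proof is correct and takes essentially the same route as the paper: the paper's argument is precisely that two distinct nodes each occupying the first position of some ranking satisfy $a\prec_{c_m}x$ and $x\prec_{c_s}a$, hence compete, hence both lie in $D$. Your additional handling of the trivial case $x=a$ and the explicit appeal to maximality of $D$ are fine, just slightly more verbose than the paper's one-line version.
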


\begin{proof} If $c_m\ne c_s$ and $a\neq x$ satisfy  $c_m^{-1}(a)=1=c_s^{-1}(x)$, then clearly $a\prec_{c_m}x$ and $x\prec_{c_s}a$, so $(a,x)$ compete and therefore $x\in D$.
\end{proof}

\begin{theorem}
Given a finite set ${\mathcal R}=\{c_1, \dots, c_r\}$ of rankings of nodes ${\mathcal N}=\{1,\dots,n\}$, the sets of eventual competitors can be identified with closed intervals of natural numbers $[p,q]$ in the following sense
\[
 D_{[p,q]}= \{x\in{\mathcal N}\ |\ c_s^{-1}(x)\in[p,q]\hbox{ for some $c_s\in {\mathcal R}$}\}.
 \]
 Moreover $p$ and $q$ are the first on the left  and last on the right  positions of the elements of $D_{[p,q]}$ with respect to all rankings.
\end{theorem}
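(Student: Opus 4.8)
The plan is to show that each set of eventual competitors occupies one and the same block of positions in \emph{every} ranking, and then to read off $p$ and $q$ as the endpoints of that block.

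\textbf{Step 1 (each component is an interval in each ranking).} Fix a set of eventual competitors $D$ and a ranking $c_m$, and let $P_m=\{c_m^{-1}(y)\mid y\in D\}$ be the set of positions that $D$ occupies in $c_m$. If $p<s<q$ with $p,q\in P_m$, put $a=c_m(p),\ b=c_m(q)\in D$ and $x=c_m(s)$; then $a\prec_{c_m}x\prec_{c_m}b$, so Lemma~\ref{convexity} forces $x\in D$ and hence $s\in P_m$. Thus $P_m$ is an interval, necessarily of length $|D|$. Since the sets of eventual competitors partition ${\mathcal N}$, in the fixed ranking $c_m$ they occupy pairwise disjoint intervals that tile $\{1,\dots,n\}$, i.e. they appear as consecutive blocks.

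\textbf{Step 2 (the block is independent of the ranking --- the crux).} Take two distinct components $D,D'$ and pick $a\in D$, $b\in D'$. As $a$ and $b$ lie in different connected components of $G_c({\mathcal R})$ they do not eventually compete, hence do not compete, so their relative order is the same in all rankings, say $a\prec_c b$ for every $c$. In any ranking $c$ the blocks of $D$ and $D'$ are disjoint intervals by Step 1, and $a\prec_c b$ forces $D$'s block to lie entirely to the left of $D'$'s; since the orientation $a\prec_c b$ never changes, \emph{all} of $D$ precedes \emph{all} of $D'$ in \emph{every} ranking. This defines a total order on the components. Listing them in increasing order as $D_1,\dots,D_t$, in each ranking $D_1$ must occupy the leftmost $|D_1|$ positions, then $D_2$ the next $|D_2|$, and so on; hence each $D_i$ occupies the \emph{same} interval $[p_i,q_i]$, with $p_i=1+\sum_{j<i}|D_j|$ and $q_i=\sum_{j\le i}|D_j|$, in every ranking.

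\textbf{Step 3 (identification and endpoints).} Writing $[p,q]=[p_i,q_i]$ for the component $D=D_i$, if $x\in D$ then $c_s^{-1}(x)\in[p,q]$ for every $c_s$, so $x\in D_{[p,q]}$; conversely, if $c_s^{-1}(x)\in[p,q]$ for some $c_s$, then $x$ sits in $D$'s block in $c_s$, whence $x\in D$. Thus $D=D_{[p,q]}$. Finally, since $D$ occupies exactly $[p,q]$ in every ranking, $p$ is the smallest and $q$ the largest position any element of $D$ ever takes over all rankings, which is the asserted description of the endpoints. The main obstacle is Step 2: Step 1 only tells us that, ranking by ranking, a component is \emph{some} interval, and a priori the blocks could be permuted differently in different rankings; the work lies in upgrading ``each individual cross-component pair keeps its relative order'' together with the block structure into ``entire blocks keep their order,'' which is what pins down $[p,q]$ uniformly. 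One could instead argue inductively, using Lemma~\ref{casobase} to identify the leftmost component as $D_{[1,q_1]}$ and then peeling it off, but the total-order argument disposes of all components at once.
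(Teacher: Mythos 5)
Your proof is correct, but it follows a genuinely different route from the paper's. The paper argues constructively, peeling components off from the left: it starts with an element occupying the first position, invokes Lemma~\ref{casobase} to place all first-position elements in the same component $D$, and then grows a chain $D_{[1,1]}\subseteq D_{[1,p_1]}\subseteq D_{[1,p_2]}\subseteq\cdots$, where the inductive step combines Lemma~\ref{convexity} with a counting argument (an element $x$ landing at a position in $[p_k,p_{k+1}]$ either precedes the rightmost captured element $b$ in some ranking, forcing $x\in D$ directly, or else by pigeonhole some already-captured element must be overtaken by $x$, forcing a competition); the chain stabilizes at exactly $D$, which is then deleted and the argument repeated on the remaining nodes. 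This is precisely the inductive alternative you mention and set aside in your closing sentence, and its payoff is that the proof \emph{is} an algorithm --- the computation scheme stated in the remark right after the theorem is read off from it. Your argument is structural instead: Lemma~\ref{convexity} shows each component is an interval of positions within each single ranking, and the observation that nodes in distinct components never compete (hence never swap) upgrades the per-ranking block structure to a ranking-independent one. This avoids Lemma~\ref{casobase} and the counting step entirely, yields the explicit endpoint formulas $p_i=1+\sum_{j<i}|D_j|$ and $q_i=\sum_{j\le i}|D_j|$, and, as a by-product, your Step 2 already establishes the total order on sets of eventual competitors that the paper only derives later (Proposition~\ref{binaryrelation} and the corollary following it). What your route gives up is the direct algorithmic content; what it gains is brevity and extra structural information obtained in one pass.
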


\begin{proof} We will show that every set of eventual competitors has the form $D_{[p,q]}$ for some natural numbers $p,q$.
Let $a\in {\mathcal N}$, $c_m\in \mathcal{R}$ such that $c_m^{-1}(a)=1$ and let $D$ be the set of eventual competitors that contains $a$. Set
\[
D_{[1,1]}=\{x\in{\mathcal N}\ |\ c_s^{-1}(x)=1\hbox{ for some $c_s\in {\mathcal R}$}\}.
\]
By Lemma~\ref{casobase}, we get that $D_{[1,1]}\subseteq D$.

Now, define
\[
D_{[1,p_k]}=\{x\in{\mathcal N}\ |\ c_s^{-1}(x)\in[1, p_k]\hbox{ for some $c_s\in{\mathcal R}$}\}
\]
and set $p_{k+1}$ as the last (to the right) position of any element of $D_{[1,p_k]}$ with respect to any ranking. We claim that if $D_{[1,p_k]}\subseteq D$  and
\[
D_{[1,p_{k+1}]}=\{x\in{\mathcal N}\ |\ c_s^{-1}(x)\in[1, p_{k+1}]\hbox{ for some $c_s\in{\mathcal R}$}\},
\]
then $D_{[1,p_{k+1}]}\subseteq D$. Let $x\in{\mathcal N}$ with $c_s^{-1}(x)\in [1,p_{k+1}]$ for some $c_s$; then either  $c_s^{-1}(x)\in [1,p_k]$ and $x\in D_{[1,p_k]}\subset D$ by hypothesis, or $c_s^{-1}(x)\in [p_k,p_{k+1}]$. In this second case, let $b$ be an element of $D_{[1,p_k]}$ who appears in position $p_{k+1}$  with respect to some ranking $c_{m_b}$, i.e., $c_{m_b}^{-1}(b)=p_{k+1}$. If $x\prec_{c_{m_b}}b$ then $x\in D$ by Lemma~\ref{convexity}, so we can suppose that $b\prec_{c_{m_b}}x$. All the elements on the left of $b$ in ranking $c_{m_b}$ belong to $D$ by Lemma~\ref{convexity}. Suppose that the number of those elements is $t$. If $x\prec_{c_s}b$ then $(x,b)$ compete and $x\in D$, so we can suppose that $b\prec_{c_s}x$. On the left of $x$ in ranking $c_s$ there are at most $t$ elements but one of them is $b$ so one of the elements $z$ such that $z\prec_{c_{m_b}} b$  must go after $x$ in ranking $c_s$, making $(x,z)$ compete, thus $x\in D$.

Since ${\mathcal N}$ is finite and $D_{[1,p_m]}\subseteq {\mathcal N}$, the chain of sets $D_{[1,1]}\subset D_{[1,p_1]}\subset...$ stabilizes at some $D_{[1,p_m]}\subseteq D$. Moreover, $D\subseteq D_{[1,p_m]}$: by hypothesis $a\in D$ so given any other element $x\in D$ there exist a finite number of elements $a_1,a_2,\dots,a_k$ such that $(a,a_1)$, $(a_1,a_2)$,\dots, $(a_k,x)$ compete. From the facts that $a\in D_{[1,1]}$ and $(a,a_1)$ compete we get that $a_1\in D_{[1,p_1]}$; similarly, from $a_1\in D_{[1,p_1]}$ and $(a_1,a_2)$ compete, $a_2\in D_{[1,p_2]}$,... and repeating this argument we get $x\in D_{[1,p_{k+1}]}\subseteq D_{[1,p_m]}$.

Delete from ${\mathcal N}$ all the elements appearing in $D$ and repeat the same argument to find the rest of sets of eventual competitors.
\end{proof}

\begin{remark}
Since the proof of last theorem is constructive it gives an algorithm for computing the sets of eventual competitors directly from the rankings and not as connected components of the competitivity graph.

\begin{algorithm}
 \SetKwInOut{Input}{Input}
 \SetKwInOut{Output}{Output}
 \BlankLine
 \Input{\begin{itemize}
           \item A finite set of nodes ${\mathcal N}=\{1,\dots, n\}$ ($n\in \N$)
           \item A finite set of rankings ${\mathcal R}=\{c_1,\dots, c_r\}$ of ${\mathcal N}$ ($r\in \N$)
         \end{itemize}}
 \BlankLine
 \Begin{
  $j:=1$\;
  $q_0=0$\;
  $q_j:=1$\;
  \While{$|{\mathcal N}|>0$}{
    $D_j:=\emptyset$\;
    $p_0:=q_{j-1}$\;
    $p_1:=q_j$\;
    $i:=0$\;
    \While{$p_i\ne p_{i+1}$}{
     $i:=i+1$\;
     \emph{Construct $D_j:=D_{[q_j,p_i]}$}\;
     $\displaystyle p_{i+1}:=\max_{x\in D_j,\, c\in{\mathcal R}} c^{-1}(x)$\;
     }
    ${\mathcal N}:={\mathcal N}\setminus D_j$\;
    $j:=j+1$\;
    $q_j:=p_i$\;
  }
  }
 \BlankLine
 \Output{Sets of eventual competitors $D_1,\dots,D_k$}
 \caption{Computation of sets of eventual competitors}
 \BlankLine
\end{algorithm}

It is easy to check that the computational cost of this algorithms is of order $kn^2$. Note that since the sets of eventual competitors coincide with the connected components of the competitivity graph, they can also be calculated with the usual connected components algorithms (which finally has computational complexity of order $kn^2$ as well), but this method requires the computation of the competitivity graph in advance.
\medskip
\end{remark}


In the last part of this section we will show  that we can define a binary relation between sets of eventual competitors that is a total order relation. Before that we need to introduce a directed graph related to a set of rankings, included in the following definition.

\begin{definition}
Given a set  ${\mathcal R}=\{c_1,\dots, c_r\}$ of $r$ rankings ($r\ge 2$) of nodes ${\mathcal N}=\{1,\dots, n\}$, we define the directed graph $G_{d}({\mathcal R})$ in the following way:
\begin{itemize}
 \item[{\it (i)}] the vertex set of $G_{d}({\mathcal R})$ is ${\mathcal N}$.
 \item[{\it (ii)}] if $i\ne j\in{\mathcal N}$ there is an outlink from $i$ to $j$ in $G_{d}({\mathcal R})$ if there exists a ranking $c_m\in {\mathcal R}$ such that $i \preceq_{c_m}  j$.
\end{itemize}
\end{definition}

\begin{remark}
Notice that this directed graph $G_{d}({\mathcal R})$ coincides with the directed graph $G_{\preceq}$  defined by the (reflexive and antisymmetric) relation $\preceq$ given by:
 \begin{itemize}
  \item[{\it (i)}] $i\preceq i$ for any $i\in {\mathcal N}$,
  \item[{\it (ii)}] $i\preceq j$ ($i\neq j\in {\mathcal N}$)  if there exists $c_m\in {\mathcal R}$ such that $i\preceq_{c_m} j$. 
 \end{itemize}
The competitivity graph $G_c({\mathcal R})$ coincides with the undirected graph with the same set of nodes of $G_{d}({\mathcal R})$ and links between  pairs of nodes $(i,j)$ when there is an outlink from $i$ to $j$ and an outlink from $j$ to $i$ in  $G_{d}({\mathcal R})$.
\end{remark}

\begin{proposition}\label{binaryrelation}
Following the same notation as before, if $D_1$ and $D_2$ are two different sets of eventual competitors, the following conditions about the directed graph $G_d({\mathcal R})$ are equivalent:
 \begin{itemize}
 \item[{\it(i)}] there is an outlink from a node $a\in D_1$ to a node $b\in D_2$,
 \item[{\it (ii)}] for every node in $D_1$ there is an outlink to every node of $D_2$
 \end{itemize}(similarly if we replace the word {\rm outlink} with the word {\rm inlink}).
\end{proposition}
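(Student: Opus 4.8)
The plan is to first isolate the single structural fact that drives the equivalence. If $a\in D_1$ and $b\in D_2$ with $D_1\neq D_2$, then $a$ and $b$ lie in distinct connected components of $G_c({\mathcal R})$ and therefore do not compete. Because each ranking is a bijection (there are no ties), for every $c_m$ exactly one of $a\prec_{c_m}b$ or $b\prec_{c_m}a$ holds, and non-competition forbids both orders from occurring across the family; hence either $a\prec_{c_m}b$ for \emph{all} $m$, or $b\prec_{c_m}a$ for \emph{all} $m$. In particular, a single outlink $a\to b$ (which asserts only $a\prec_{c_m}b$ for \emph{some} $m$) immediately upgrades to the uniform statement $a\prec_{c_m}b$ for \emph{every} $c_m$. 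This passage from an existential to a universal ordering is the conceptual core of the proof.

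With this in hand, {\it (ii)}$\implies${\it (i)} is trivial, since sets of eventual competitors are nonempty, so one may simply pick any $a\in D_1$ and $b\in D_2$. For {\it (i)}$\implies${\it (ii)} I would propagate the uniform order in two steps, each invoking the convexity Lemma~\ref{convexity} to rule out a misplaced node. First, fix $a'\in D_1$ and a ranking $c_m$ and suppose, for contradiction, that $b\prec_{c_m}a'$; combined with $a\prec_{c_m}b$ this gives $a\prec_{c_m}b\prec_{c_m}a'$, so since $a,a'\in D_1$ convexity forces $b\in D_1$, contradicting $b\in D_2$. Thus $a'\prec_{c_m}b$ for every $a'\in D_1$ and every $m$. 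Second, fix also $b'\in D_2$ and suppose $b'\prec_{c_m}a'$; using the previous step $a'\prec_{c_m}b$, we obtain $b'\prec_{c_m}a'\prec_{c_m}b$, so since $b,b'\in D_2$ convexity forces $a'\in D_2$, contradicting $a'\in D_1$. Hence $a'\prec_{c_m}b'$ for all $a'\in D_1$, all $b'\in D_2$ and all $m$, which in particular yields the required outlink from $a'$ to $b'$.

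The parenthetical inlink version needs no separate argument: an inlink into $D_1$ coming from $D_2$ is precisely an outlink from $D_2$ into $D_1$, so it is exactly the outlink equivalence just proved with the roles of $D_1$ and $D_2$ interchanged. The only point demanding care---and the main obstacle---is orienting the inequalities correctly in the two applications of Lemma~\ref{convexity}, so that in each case the trapped node genuinely sits between two elements of a single set of eventual competitors; once the orderings are set up in the right left-to-right sense, the contradictions are automatic.
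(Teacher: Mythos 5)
Your proof is correct, but it takes a genuinely different route from the paper's. The paper proves {\it (i)}$\implies${\it (ii)} by propagating the outlink along chains of competitors: a local step (if $a$ has an outlink to $b_1$ and $(b_1,b_2)$ compete, then $a$ has an outlink to $b_2$), then induction along the chain of competitors joining any two nodes of $D_2$, and finally a symmetric pass to cover all of $D_1$; its only structural input is the same uniform-order observation you begin with (nodes in distinct sets of eventual competitors never compete, hence their relative order is the same in every ranking). You instead combine that observation with Lemma~\ref{convexity}, which lets you trap any offending node strictly between two members of a single set of eventual competitors and so dispense with chain induction entirely --- in effect the induction is hidden inside the proof of Lemma~\ref{convexity}, which is indeed available at that point in the paper. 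Your applications of the lemma are correctly oriented in both steps ($a\prec_{c_m}b\prec_{c_m}a'$ with $a,a'\in D_1$, and $b'\prec_{c_m}a'\prec_{c_m}b$ with $b,b'\in D_2$). What your route buys is brevity and a stronger conclusion: every element of $D_1$ precedes every element of $D_2$ in \emph{every} ranking, not merely in some ranking, which makes the ensuing total order on the sets of eventual competitors transparent. What the paper's route buys is self-containedness: it works directly from the definition of eventual competition without appealing to the convexity property. The inlink case is handled identically in both, by exchanging the roles of $D_1$ and $D_2$.
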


\begin{proof} We will separate the proof in three steps:

\noindent\underbar{Step 1}: We will show that if $a\in D_1$,  $b_1,b_2\in D_2$, the pair $(b_1,b_2)$ compete and there is an outlink from $a$ to $b_1$, then necessarily there is an outlink from $a$ to $b_2$ (similarly if we replace the word {\it outlink} by {\it inlink}). By hypothesis there exists a ranking $c_m$ such that $a\prec_{c_m}b_1$. If $a\prec_{c_m}b_2$ the claim holds; otherwise  $b_2\prec_{c_m} a\prec_{c_m}b_1$, but since $(b_1,b_2)$  compete  there exists another ranking $c_{m'}$ such that $b_1\prec_{c_{m'}}b_2$ and since $a$ cannot compete with $b_1$ necessarily $a\prec_{c_{m'}} b_1\prec_{c_{m'}}b_2$  making $(a,b_2)$ compete (a contradiction).

\noindent\underbar{Step 2}: We will show that if $a\in D_1$, $b\in D_2$ and there is an outlink from $a$ to $b$ then for every $b'$ of $D_2$ there is an outlink from $a$ to $b'$ (similarly if we replace the word {\it outlink} by {\it inlink}). Since $b,b'\in D_2$ there exist $k\in \mathbb{N}$ and $b_1,\dots, b_k\in D_2$ such that $(b,b_1)$ compete, $(b_1,b_2)$ compete, \dots, $(b_k,b')$ compete. Nodes $a,b,b_1$ are in the conditions of Step 1 so there is an outlink from $a$ to $b_1$; again nodes $a,b_1,b_2$ are in the conditions of Step 1  so there is an outlink from $a$ to $b_2$, \dots, and the result follows by induction.

\noindent\underbar{Step 3}: Proof of $(i)\Rightarrow (ii)$. If from an element $a\in D_1$ there is an outlink to an element in $D_2$, by Step 2 there is an outlink from $a$ to any element of $D_2$. Fix now any element of $D_2$ and use Step 2 to get that there are outlinks from any element of $D_1$ to this fixed element.
\end{proof}

As a consequence of Proposition~\ref{binaryrelation}, we can define a binary relation between the sets of eventual competitors as follows.

\begin{definition}\label{def:binaryrelation}
Given a set  ${\mathcal R}=\{c_1,\dots, c_r\}$ of $r$ rankings ($r\ge 2$) of nodes ${\mathcal N}=\{1,\dots, n\}$ whose sets of eventual competitors are denoted by $D_1,\dots, D_k$, we define a binary relation $\cdot\longrightarrow\cdot$ between sets of eventual competitors as follows:
 \begin{itemize}
\item[{\it (i)}] $D_i\longrightarrow D_i$ for every set of eventual competitors $D_i$ of ${\mathcal N}$,
\item[{\it (ii)}] for any two different sets of eventual competitors $D_i, D_j$,
\[
D_i\longrightarrow D_j \iff \hbox{ any of the two statements of Proposition~\ref{binaryrelation} holds.}
\]
\end{itemize}
\end{definition}

\begin{lemma}
Following the same notation as before, the binary relation given  in Definition~\ref{def:binaryrelation} is transitive, i.e., if $D_1$, $D_2$, $D_3$ are three different sets of eventual competitors and $D_1\longrightarrow D_2$ and $D_2\longrightarrow D_3$ then $D_1\longrightarrow D_3$.
\end{lemma}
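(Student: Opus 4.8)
The plan is to first reinterpret the relation $\longrightarrow$ in a way that exposes a single-ranking structure. The decisive observation is that two nodes lying in distinct sets of eventual competitors can never compete (as noted in the remark following the definition of eventual competitors), and a pair that does not compete keeps a \emph{fixed} relative order across the whole family $\mathcal R$: since each ranking is a bijection, for non-competing nodes $u,v$ either $u\prec_{c}v$ for every $c\in\mathcal R$ or $v\prec_{c}u$ for every $c\in\mathcal R$. Accordingly, for distinct sets $D_i\ne D_j$ I would reformulate $D_i\longrightarrow D_j$ as the statement that \emph{every element of $D_i$ precedes every element of $D_j$ in every ranking of $\mathcal R$}.

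To justify that reformulation I would start from $D_i\longrightarrow D_j$, which by Definition~\ref{def:binaryrelation} together with Proposition~\ref{binaryrelation} means that every node of $D_i$ has an outlink to every node of $D_j$. Fixing arbitrary $a\in D_i$ and $b\in D_j$, the outlink provides a ranking $c_m$ with $a\prec_{c_m}b$; since $a$ and $b$ belong to different sets of eventual competitors they do not compete, so in fact $a\prec_{c}b$ for \emph{all} $c\in\mathcal R$. As $a$ and $b$ are arbitrary, this gives exactly the claimed universal ordering.

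With this in hand the transitivity is immediate. Assuming $D_1\longrightarrow D_2$ and $D_2\longrightarrow D_3$ with $D_1,D_2,D_3$ pairwise distinct, the reformulation yields that every element of $D_1$ precedes every element of $D_2$ in every ranking, and every element of $D_2$ precedes every element of $D_3$ in every ranking. I would then pick any $b\in D_2$ (which is nonempty, being a connected component of $G_c(\mathcal R)$) together with arbitrary $a\in D_1$ and $c\in D_3$, so that in a single ranking $c_m$ we have $a\prec_{c_m}b\prec_{c_m}c$ and hence $a\prec_{c_m}c$. This is an outlink from $a$ to $c$ in $G_d(\mathcal R)$, so condition~{\it(i)} of Proposition~\ref{binaryrelation} holds for the pair $(D_1,D_3)$, and by Definition~\ref{def:binaryrelation} we conclude $D_1\longrightarrow D_3$.

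The only genuine obstacle is the temptation to argue directly: $D_1\longrightarrow D_2$ furnishes a ranking witnessing $a\prec b$ and $D_2\longrightarrow D_3$ furnishes a ranking witnessing $b\prec c$, but a priori these are \emph{different} rankings and cannot be chained to produce $a\prec c$. The whole point of the plan is to first upgrade each outlink into an ordering valid in every ranking — which is precisely what the non-competitiveness of nodes in distinct components provides — so that the chain $a\prec b\prec c$ can be read off inside one and the same ranking.
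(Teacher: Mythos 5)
Your proof is correct, and it takes a genuinely different --- and in fact more self-contained --- route than the paper's. The paper argues by contradiction: assuming $D_1\longrightarrow D_2$, $D_2\longrightarrow D_3$ but $D_3\longrightarrow D_1$ (note that passing from ``not $D_1\longrightarrow D_3$'' to ``$D_3\longrightarrow D_1$'' silently uses totality of the relation, which is only recorded in the corollary \emph{following} the lemma), it fixes $x\in D_1$ and a single ranking $c_m$ in which every $a\in D_3$ precedes $x$ and $x$ precedes every $b\in D_2$, concluding $D_3\longrightarrow D_2$, which contradicts $D_2\longrightarrow D_3$. The crucial step there --- that one ranking $c_m$ simultaneously witnesses all these outlinks --- is exactly the ``universality'' fact you isolate and prove: since nodes in distinct sets of eventual competitors never compete, an outlink from $a$ to $b$ forces $a\prec_c b$ in \emph{every} ranking $c\in{\mathcal R}$. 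The paper uses this silently (Proposition~\ref{binaryrelation} only provides, for each pair, \emph{some} ranking), whereas you state it, justify it, and then obtain transitivity directly by chaining $a\prec_{c_m}b\prec_{c_m}c$ inside one and the same ranking. What your approach buys: a direct proof that needs neither totality nor contradiction, and that makes explicit the one lemma on which the whole argument rests; what the paper's approach buys: brevity on the page, at the cost of two unstated appeals (totality, and the single-ranking upgrade of outlinks). Your closing remark about the danger of chaining outlinks witnessed by different rankings identifies precisely the gap that any proof of this lemma must close, and both arguments ultimately close it with the same underlying observation.
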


\begin{proof}
Suppose that $D_1\longrightarrow D_2$, $D_2\longrightarrow D_3$ but $D_3\longrightarrow D_1$. Take a node $x\in D_1$. Since  $D_3\longrightarrow D_1$ there exists a ranking $c_m$ such that $a\prec_{c_m}x$ for all $a\in D_3$. Moreover, since $D_1\longrightarrow D_2$, $x\prec_{c_m}b$ for all $b\in D_2$, and therefore $a\prec_{c_m} b$ for all $a\in D_3$ and all $b\in D_2$, i.e., $D_3\longrightarrow D_2$ leading to a contradiction.
\end{proof}

\begin{corollary}
The binary relation given in Definition~\ref{def:binaryrelation} is a total order relation between sets of eventual competitors of ${\mathcal N}$.
\end{corollary}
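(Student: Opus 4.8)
The plan is to check the four defining properties of a total order---reflexivity, transitivity, antisymmetry and totality (comparability)---two of which are already in hand. Reflexivity is built into Definition~\ref{def:binaryrelation}{\it (i)}, and transitivity is precisely the content of the lemma immediately preceding this corollary. Hence I only need to establish antisymmetry and totality of the relation $\longrightarrow$.

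For antisymmetry I would argue by contradiction: suppose $D_1\neq D_2$ are distinct sets of eventual competitors with both $D_1\longrightarrow D_2$ and $D_2\longrightarrow D_1$. Fixing any $a\in D_1$ and $b\in D_2$, statement {\it (ii)} of Proposition~\ref{binaryrelation} applied to $D_1\longrightarrow D_2$ gives an outlink from $a$ to $b$, and applied to $D_2\longrightarrow D_1$ gives an outlink from $b$ to $a$ in $G_d({\mathcal R})$. By the remark identifying $G_c({\mathcal R})$ as the undirected graph whose edges are the pairs joined by outlinks in both directions in $G_d({\mathcal R})$, this means $(a,b)$ compete; but competing nodes eventually compete and therefore lie in the same set of eventual competitors, contradicting $D_1\neq D_2$.

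For totality I would use that every individual ranking is a linear order. Given distinct $D_1,D_2$, pick $a\in D_1$ and $b\in D_2$ and inspect their relative position in a single fixed ranking $c_1$: since $a\neq b$, either $a\prec_{c_1}b$ or $b\prec_{c_1}a$. In the first case there is an outlink from $a$ to $b$, so statement {\it (i)} of Proposition~\ref{binaryrelation} holds and $D_1\longrightarrow D_2$; in the second case symmetrically $D_2\longrightarrow D_1$. Thus any two sets of eventual competitors are comparable, and together with the three previous properties this shows $\longrightarrow$ is a total order.

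I do not expect a serious obstacle here, since the heavy lifting---the equivalence {\it (i)}$\iff${\it (ii)} of Proposition~\ref{binaryrelation} and the transitivity lemma---has already been carried out. The one step that must be handled with care is antisymmetry: the key point is that if the relation held in both directions then a single pair $a\in D_1$, $b\in D_2$ would exchange relative positions across two rankings and hence compete, which is impossible for nodes in different connected components of $G_c({\mathcal R})$. Everything else reduces to unwinding Definition~\ref{def:binaryrelation} together with the linearity of each ranking.
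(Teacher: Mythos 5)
Your proposal is correct and coincides with the paper's (implicit) treatment: the corollary is stated there without proof, reflexivity and transitivity being already in hand from Definition~\ref{def:binaryrelation}{\it (i)} and the preceding lemma, and antisymmetry and totality being regarded as routine---which is exactly what you verify. Your two added verifications are sound: antisymmetry holds because two-way outlinks between $a\in D_1$ and $b\in D_2$ would make $(a,b)$ compete, which is impossible for nodes in distinct connected components of $G_c({\mathcal R})$, and totality holds because any single ranking linearly orders $a$ and $b$, so condition {\it (i)} of Proposition~\ref{binaryrelation} is satisfied in one direction or the other.
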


\begin{remark}
With respect to this total order we can define a directed graph whose nodes are the sets of eventual competitors and edges are given by the rule: there is a link from node $D_i$ to node $D_j$ if $D_i\longrightarrow D_j$. The node $D_i$ with outlinks to any other node will be called the {\it leader among the sets of eventual competitors}, and the node with inlinks from any other node will be called the {\it looser  among the sets of eventual competitors}. Indeed, we obtain a ranking of sets of eventual competitors: the first one, the second, etc.
\end{remark}



\section*{Acknowledgements}

This paper was partially supported by Spanish MICINN Funds and FEDER Funds MTM2009-13848, MTM2010-16153 and MTM2010-18674, and Junta de Andalucia Funds FQM-264.



\end{document}